\newcommand{\xZ}{{\mathbb Z}}
\newcommand{\xQ}{{\mathbb Q}}
\newcommand{\xN}{{\mathbb N}}
\newcommand{\calA}{{\mathcal A}}
\newcommand{\calL}{{\mathcal L}}
\newcommand{\calC}{{\mathcal C}}
\newcommand{\bA}{\begin{gathered}\boxed{\:\begin{gathered}0 \\ 0\end{gathered}\:}\\A\end{gathered}}
\newcommand{\bC}{\begin{gathered}\boxed{\:\begin{gathered}1 \\ 1\end{gathered}\:}\\C\end{gathered}}
\newcommand{\bAC}{\begin{gathered}\boxed{\:\begin{gathered}0\ 1 \\ 1\ 0\end{gathered}\:}\\B\end{gathered}}
\newcommand{\coloneq}{\mathrel{\mathop:}=}
\newcommand{\mat}[1]{\boldsymbol{#1}}
\newtheorem{lmm}{Lemma}
\newtheorem{thrm}[lmm]{Theorem}
\theoremstyle{definition}
\newtheorem{dfntn}[lmm]{\emph{Definition}}
\theoremstyle{remark}
\newtheorem{rmrk}[lmm]{\emph{Remark}}
\begin{document}


\title{Morphisms fixing words associated \\ with exchange of three intervals}

\author{P. Ambro\v{z}, Z. Mas\'akov\'a, E. Pelantov\'a\\
Doppler Institute \& Department of Mathematics\\
FNSPE, Czech Technical University in Prague\\
Trojanova 13, 120 00 Praha 2, Czech Republic}

\maketitle

\begin{abstract}
We consider words coding exchange of three intervals with
permutation (3,2,1), here called 3iet words. Recently, a
characterization of substitution invariant 3iet words was
provided. We study the opposite question: what are the morphisms
fixing a 3iet word? We reveal a narrow connection of such
morphisms and morphisms fixing Sturmian words using the new notion
of amicability.
\end{abstract}

\section{Introduction}

Words coding exchange of three intervals represent one of possible
generalizations of Sturmian words to a ternary alphabet. An
exchange of three intervals is given by a permutation $\pi$ on the
set $\{1,2,3\}$, and a triplet of positive numbers $\alpha$,
$\beta$, $\gamma$, corresponding to lengths of intervals $I_A$,
$I_B$, $I_C$, respectively, which define a division of the
interval $I$. In this paper we study infinite words coding
exchange of three intervals with the permutation $(3,2,1)$. Such
words are called here 3iet words. Properties of 3iet words have
been studied from various points of view in
papers~\cite{adamczewski-jtnb-14,boshernitzan-carroll-jam-72,ferenczi-holton-zamboni-aif-51,ferenczi-holton-zamboni-jam-89,ferenczi-holton-zamboni-jam-93}.

Recently, articles~\cite{balazi-masakova-pelantova-subst_inv}
and~\cite{abmp} gave a characterization of 3iet words invariant
under a substitution. Recall that a similar question for Sturmian
words (i.e.\ words coding exchange of two intervals) has been
partially solved in~\cite{crisp-jtnb-5,komatsu-jjm-22,parvaix-jtnb-11}.
Complete solution to the task was provided by
Yasutomi~\cite{yasutomi-dm-2}. An alternative proof valid for
bidirectional Sturmian words is given
in~\cite{balazi-masakova-pelantova-i-5}, yet another proof
in~\cite{beir-tia}.

One has also asked the question from another angle: what are the
substitutions fixing a Sturmian word, this problem has been
studied in a wider context. One considers the so-called Sturmian
morphisms, i.e.\ morphisms that preserve the set of Sturmian
words. The monoid of Sturmian morphisms has been described
in~\cite{seebold-tcs-88,mignosi-seebold-jtnb-5}. It turns out that
it is generated by three simple morphisms, namely
\begin{equation}\label{e:ff}
\varphi: \
\begin{aligned}
0 & \mapsto 01\\
1 & \mapsto  0
\end{aligned}\,,\qquad
\psi: \
\begin{aligned}
0 & \mapsto 10\\
1 & \mapsto  0
\end{aligned}\,,\qquad\text{and}\qquad
E: \
\begin{aligned}
0 & \mapsto  1\\
1 & \mapsto  0
\end{aligned}\,.
\end{equation}
It is known~\cite{berstel-seebold-bbms-1} that a morphism $\xi$ such that $\xi(u)$ is
Sturmian for at least one Sturmian word $u$ belongs also to the
monoid. In particular, all morphisms fixing Sturmian words are
Sturmian morphisms.

The aim of this paper is to describe morphisms over the alphabet
$\{A,B,C\}$ fixing a 3iet word. The main tool which we use is a
narrow connection between 3iet words and Sturmian words over the
alphabet $\{0,1\}$ by means of morphisms
$\sigma_{01},\sigma_{10}:\{A,B,C\}^*\rightarrow\{0,1\}^*$ given by
\begin{equation}
\label{eq:002}
\sigma_{01}:\
\begin{aligned}
  A & \mapsto 0 \\
  B & \mapsto 01 \\
  C & \mapsto 1
\end{aligned}\,,
\qquad\text{and}\qquad \sigma_{10}:\
\begin{aligned}
  A & \mapsto 1 \\
  B & \mapsto 10 \\
  C & \mapsto 0
\end{aligned}\,.
\end{equation}

In~\cite{abmp} the following statement is proved.

\begin{thrm}[\cite{abmp}]\label{thm:1}
A ternary word $u$ is a 3iet word if and only if both
$\sigma_{01}(u)$ and $\sigma_{10}(u)$ are Sturmian words.
\end{thrm}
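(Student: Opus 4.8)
\emph{My plan} is to prove the equivalence by realizing $\sigma_{01}(u)$ as the coding of a genuine rotation (exchange of two intervals) and then reversing that construction. I first record a simplification. Writing $E$ for the exchange $0\mapsto 1,\ 1\mapsto 0$, one checks letter by letter that $\sigma_{10}=E\circ\sigma_{01}$, since $E(\sigma_{01}(A))=1=\sigma_{10}(A)$, $E(\sigma_{01}(B))=E(01)=10=\sigma_{10}(B)$ and $E(\sigma_{01}(C))=0=\sigma_{10}(C)$. As $E$ is a bijective renaming of letters it preserves factor complexity, so $\sigma_{01}(u)$ is Sturmian if and only if $\sigma_{10}(u)=E(\sigma_{01}(u))$ is Sturmian. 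Hence it suffices to prove that $u$ is a 3iet word if and only if $\sigma_{01}(u)$ is Sturmian.

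\emph{Forward direction.} Assume $u$ codes the orbit of a point $\rho$ under the exchange $T$ of the three intervals $I_A=[0,\alpha)$, $I_B=[\alpha,\alpha+\beta)$, $I_C=[\alpha+\beta,1)$ with permutation $(3,2,1)$, normalized so that $\alpha+\beta+\gamma=1$. The morphism $\sigma_{01}$ reads $A$ as $0$, $C$ as $1$, and \emph{splits} each visit to $I_B$ into the two symbols $01$; geometrically this is a Rokhlin tower of height two over $I_B$. Concretely, put the base $[0,1)\times\{0\}$ and the extra floor $I_B\times\{1\}$ on a circle of circumference $1+\beta$, identifying $(x,0)$ with $x$ and $(x,1)$ with $x-\alpha+1\in[1,1+\beta)$. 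Let $\hat T$ advance the base by $T$, except that it first sends a point of $I_B$ up to the extra floor and only from there down to $T$ of it. A direct case check shows that, modulo $1+\beta$, $\hat T$ is translation by $\beta+\gamma$ on \emph{every} piece, so $\hat T$ is the rotation by $\beta+\gamma$ on the circle of length $1+\beta$. Coding this rotation by the two arcs $[0,\alpha+\beta)$ (labelled $0$) and $[\alpha+\beta,1+\beta)$ (labelled $1$) returns exactly $\sigma_{01}(u)$: a visit to $I_A$ (resp.\ $I_C$) yields one symbol $0$ (resp.\ $1$), while a visit to $I_B$ yields $0$ from the base floor and then $1$ from the extra floor, reproducing $B\mapsto 01$. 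For a non-degenerate 3iet the rotation number $(\beta+\gamma)/(1+\beta)$ is irrational and $\rho$ is not periodic, so the resulting two-interval coding is Sturmian.

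\emph{Backward direction.} Assume $s\coloneq\sigma_{01}(u)$ is Sturmian, so it codes an irrational rotation $R$ by a two-arc partition. Since $\sigma_{01}$ is injective and non-erasing, $u$ is recovered from $s$ together with the factorization of $s$ into the blocks $0$, $01$, $1$ induced by $u$, and the letters $B$ of $u$ are precisely the blocks $01$. The task is to undo the tower of the forward step: one must show that these distinguished steps of the orbit of $R$ — the places where the block $01$ is read — are exactly the visits of the orbit to a single subinterval, so that collapsing the extra floor turns $R$ into an exchange of three intervals whose coding is $u$. Equivalently, I would show that the two binary sequences extracted from $u$, namely the coding $s'$ of ``$A$ versus $B,C$'' and the coding $s''$ of ``$A,B$ versus $C$'' (the last, respectively first, bits of the successive blocks of $s$), are Sturmian words of one and the same slope whose two thresholds bound an interval; by the characterization of 3iet words as codings of a rotation by three intervals with the corresponding gap condition, this makes $u$ a 3iet word.

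The main obstacle is this backward direction. Whereas the forward tower computation is an explicit and rigid identity, recovering the three-interval structure requires controlling the positions of the blocks $01$ inside $s$. The point to extract from the hypothesis is that the \emph{balance} of the Sturmian word $s$ (equivalently, its complexity $n+1$) forces the $B$-positions of $u$ to be distributed as a two-interval coding of the same rotation, so that $s'$ and $s''$ share a slope and their intercepts differ by exactly one interval. Ruling out the \emph{a priori} possibility that a ternary word which is not a 3iet word could nevertheless have a Sturmian image under $\sigma_{01}$ is precisely this step.
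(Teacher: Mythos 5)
The paper itself gives no proof of this statement (it is imported from the reference \cite{abmp}), so there is nothing internal to compare against; judged on its own, your attempt has a fatal flaw at its very first step. The identity $\sigma_{10}=E\circ\sigma_{01}$ holds only for the version of $\sigma_{10}$ printed in~\eqref{eq:002}, and that display is evidently a misprint: every other use of $\sigma_{10}$ in the paper requires $\sigma_{10}:A\mapsto 0,\ B\mapsto 10,\ C\mapsto 1$. Indeed, Figure~\ref{fff} asserts $\mathrm{ter}(0100101,0101001)=ACABAC$, which forces $\sigma_{10}(A)=0$ and $\sigma_{10}(C)=1$; the example~\eqref{e:fff} with $\mathrm{ter}(010,010)=ACA$ forces the same; and Case~B of the proof of Theorem~\ref{thm:xy} derives a contradiction from ``$\varphi(1)$ starts with $1$ but $\psi(1)$ starts with $0$,'' which is only a contradiction with $\varphi(1)\propto\psi(1)$ under the corrected definition. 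With the correct $\sigma_{10}$, which differs from $\sigma_{01}$ only on the letter $B$, the two hypotheses of the theorem are genuinely independent, and your reduction to the single condition ``$\sigma_{01}(u)$ is Sturmian'' replaces the theorem by a \emph{false} statement. Concretely: take the Fibonacci word $f$, declare exactly one occurrence of the factor $01$ to be a block $B$, and read every other letter as $A$ or $C$; the resulting ternary word $w$ satisfies $\sigma_{01}(w)=f$, which is Sturmian, yet $w$ contains the letter $B$ exactly once and hence is not a 3iet word (the orbit of $0$ under a three-interval exchange with $|I_B|>0$ and irrational $\varepsilon$ returns to $I_B$ infinitely often). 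It is exactly the second condition, with $B\mapsto 10$, that excludes such words by forcing the set of $B$-positions to be the set of visits to an interval; your reduction discards precisely that information.

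Beyond this, the backward implication --- the only nontrivial one --- is not actually proved: you state what ``one must show'' and what you ``would show'' (that the positions of the blocks $01$ are the visits of the rotation orbit to a single subinterval) and correctly identify it as the main obstacle, but supply no argument; and, as the counterexample shows, no argument from a single Sturmian image could succeed. The forward direction via the two-floor Rokhlin tower over $I_B$ is essentially sound and matches the picture the paper uses in Section~\ref{sec:ternarizace} to compute the parameters of $\sigma_{01}(u)$ as an exchange of two intervals; but once the erroneous identity is abandoned you would also need the symmetric tower, with the extra floor traversed \emph{before} rather than after the base visit to $I_B$ (the exchange $T_{10}$ on $[c+l-1,c+l)$ in the paper's notation), to obtain that $\sigma_{10}(u)$ is Sturmian as well.
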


Another important statement connecting 3iet words and Sturmian
words is taken from~\cite{balazi-masakova-pelantova-subst_inv}.

\begin{thrm}[\cite{balazi-masakova-pelantova-subst_inv}]\label{thm:2}
A non-degenerate 3iet word $u$ is invariant under a substitution
if and only if both $\sigma_{01}(u)$ and $\sigma_{10}(u)$ are
invariant under substitution.
\end{thrm}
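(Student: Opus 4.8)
\emph{Proof plan.}
The plan is to transfer substitution invariance back and forth across the two codings $\sigma_{01}$ and $\sigma_{10}$ by means of commuting diagrams: a nonerasing morphism $\eta$ over $\{A,B,C\}$ should correspond to binary morphisms $\xi_{01},\xi_{10}$ through the relations $\sigma_{01}\circ\eta=\xi_{01}\circ\sigma_{01}$ and $\sigma_{10}\circ\eta=\xi_{10}\circ\sigma_{10}$; once such a relation holds, fixing $u$ and fixing $\sigma_{01}(u)$ (resp.\ $\sigma_{10}(u)$) become equivalent, since for instance $\xi_{01}(\sigma_{01}(u))=\sigma_{01}(\eta(u))$. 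Two structural facts drive the argument. First, one checks on the generators that $\sigma_{10}=E\circ\sigma_{01}$, whence $\sigma_{10}(u)=E(\sigma_{01}(u))$; thus the two Sturmian images are mirror images of one another and their substitution invariances are linked by conjugation by $E$ (in particular, each implies the other, so the two hypotheses in the statement are consistent). Second, a non-degenerate 3iet word is recoverable from its codings: among ternary words with a prescribed $\sigma_{01}$-image, exactly one is a 3iet word, the only ambiguity being the local replacement $B\leftrightarrow AC$, which both codings share; this follows from the interval-exchange description underlying Theorem~\ref{thm:1}.

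For the implication that a morphism $\eta$ fixing $u$ yields fixing morphisms of the images, I would set $\xi_{01}(0)\coloneq\sigma_{01}(\eta(A))$ and $\xi_{01}(1)\coloneq\sigma_{01}(\eta(C))$, and define $\xi_{10}$ symmetrically. The point to verify is that these are consistent, i.e.\ that $\sigma_{01}(\eta(B))=\sigma_{01}(\eta(A))\,\sigma_{01}(\eta(C))$ and likewise for $\sigma_{10}$; this \emph{amicability} condition expresses that $\eta$ respects the block decomposition induced by the codings. Granting it, $\xi_{01}$ and $\xi_{10}$ are well-defined morphisms, both squares commute, and evaluating at $u$ gives $\xi_{01}(\sigma_{01}(u))=\sigma_{01}(u)$ and $\xi_{10}(\sigma_{10}(u))=\sigma_{10}(u)$, so both images are substitution invariant.

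For the converse I would reverse the construction. Given $\xi_{01}$ fixing $\sigma_{01}(u)$, the conjugate $E\xi_{01}E$ fixes $\sigma_{10}(u)$, and this $E$-compatibility is exactly what renders the pair amicable; when instead one starts from separately given $\xi_{01}$ and $\xi_{10}$, I would synchronize them into an $E$-conjugate pair by passing to suitable common powers, using that the morphisms fixing a given Sturmian word are powers of a single one. With an amicable pair in hand I would define $\eta(A),\eta(B),\eta(C)$ to be ternary words whose $\sigma_{01}$-images are $\xi_{01}(0),\xi_{01}(1),\xi_{01}(01)$ respectively, choosing at each letter the decoding into blocks from $\{0,1,01\}$ that is consistent with the 3iet structure. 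One then checks $\sigma_{01}(\eta(u))=\sigma_{01}(u)$ and $\sigma_{10}(\eta(u))=\sigma_{10}(u)$, invokes Theorem~\ref{thm:1} to see that $\eta(u)$ is again a 3iet word, and concludes $\eta(u)=u$ by the recoverability noted above.

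The main obstacle is the amicability condition together with the $B\leftrightarrow AC$ decoding ambiguity. In the forward direction one must show that a morphism fixing a 3iet word (possibly only after passing to a power) genuinely respects the codings, i.e.\ that $\eta(B)$ and $\eta(AC)$ have equal images under both $\sigma_{01}$ and $\sigma_{10}$; this should follow from uniform recurrence of $u$ and the rigidity forced by $\eta(u)=u$. In the backward direction one must choose the block decodings in the lift coherently, so that the resulting $\eta$ is a single well-defined morphism fixing $u$ rather than merely matching images, and must reconcile the two a priori independent Sturmian morphisms through the $E$-conjugacy $\sigma_{10}=E\circ\sigma_{01}$. Establishing that such compatible decodings exist and are dictated by the 3iet structure is the technical heart of the proof, and the notion of amicability is introduced precisely to package this compatibility.
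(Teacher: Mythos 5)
Your central structural claim, $\sigma_{10}=E\circ\sigma_{01}$, is false for the morphisms the paper actually works with, and this breaks the converse direction of your argument. Equation~\eqref{eq:002} as printed does give $\sigma_{10}(A)=1$, $\sigma_{10}(C)=0$, but this is a misprint: the worked examples (Figure~\ref{fff}, where $\mathrm{ter}(0,0)=A$ and $\mathrm{ter}(1,1)=C$, and the ternarization~\eqref{e:fff}) and the geometric derivation in Section~\ref{sec:ternarizace} all force $\sigma_{10}:A\mapsto 0$, $B\mapsto 10$, $C\mapsto 1$, which agrees with $\sigma_{01}$ except on $B$. For that $\sigma_{10}$ one has $\sigma_{10}\neq E\circ\sigma_{01}$, and indeed it must be so: the paper computes that $\sigma_{01}(u)$ and $\sigma_{10}(u)$ are Sturmian words of the same slope $\varepsilon$ but with \emph{different} intercepts $-c$ and $-c-l+1$, so by Theorem~\ref{thm:9} their substitution invariances are two genuinely independent conditions (one constrains $-c'$, the other $c'+l'$). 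Were the two words $E$-conjugate, each condition would imply the other and the word ``both'' in the statement would be vacuous; it is not. Consequently you cannot obtain $\xi_{10}$ from $\xi_{01}$ as $E\xi_{01}E$, and the ``$E$-compatibility'' you invoke to force amicability in the converse direction does not exist.

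Beyond this, the two steps you yourself flag as the technical heart are left unproved, and they are genuinely hard: (i) in the forward direction, the identity $\sigma_{01}\eta(B)=\sigma_{01}\eta(A)\,\sigma_{01}\eta(C)$ (and its $\sigma_{10}$ analogue with the factors reversed) does not follow from ``uniform recurrence and rigidity'' in any evident way --- the paper establishes it in the proof of Theorem~\ref{thm:001} via the geometric representation, the eigenvector identity $\ell(B)=\ell(A)+\ell(C)$ coming from Theorem~\ref{thm:8}, and the sign condition $\Lambda'>0$ (which is exactly why $\eta^2$ must sometimes replace $\eta$); (ii) in the converse direction, manufacturing an \emph{amicable} pair $\varphi\propto\psi$ from two independently given Sturmian substitutions is nowhere addressed and cannot be reduced to passing to common powers plus $E$-conjugation. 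Note finally that the paper's own proof of Theorem~\ref{thm:2} is entirely different and avoids all of this: it is purely arithmetic, comparing the Yasutomi conditions (Theorem~\ref{thm:9}) for the two Sturmian words with parameters $(\varepsilon,-c)$ and $(\varepsilon,-c-l+1)$ against the conditions of Theorem~\ref{thm:7} for $u$, and observing that the conjunction of the former is exactly the latter. A combinatorial proof along your lines amounts to reproving Theorems~\ref{thm:xy} and~\ref{thm:001}, and the missing steps above would have to be supplied.
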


The paper is organized as follows. In Section~\ref{sec:preli} we
recall the definitions of 3iet words and morphisms and the
geometric representation of a fixed point of a morphism. In
Section~\ref{sec:amicable} we define a relation on the set of
Sturmian morphisms with a given incidence matrix, called
amicability, and we show how to construct from a pair of amicable
morphisms a morphism over the alphabet $\{A,B,C\}$ with a 3iet
fixed point (Theorem~\ref{thm:xy}). In
Section~\ref{sec:ternarizace} we show, that any morphism $\eta$
fixing a non-degenerate 3iet word (or its square $\eta^2$) is
constructed in this way (Theorem~\ref{thm:001}).

\section{Preliminaries}\label{sec:preli}

\subsection{Three interval exchange}

A transformation $T:I\to I$ of an exchange of three intervals is
usually defined as a mapping with the domain
$I=[0,\alpha+\beta+\gamma)$, where $\alpha,\beta,\gamma$ are
arbitrary positive numbers determining the splitting of $I$ into
three disjoint subintervals $I=I_A\cup I_B\cup I_C$. An infinite
word associated to such a transformation is given as a coding of
an initial point $x_0\in I$ in a ternary alphabet $\{A,B,C\}$.
Properties of the transformation $T$ and the corresponding
infinite word do not depend on absolute values of
$\alpha,\beta,\gamma$, but rather on their relative sizes. As
well, translation of the interval $I$ on the real line does not
influence the corresponding dynamical system. For the study of
substitution properties of 3iet words it proved useful to consider
the definition of a 3iet mapping with parameters normalized by
$\alpha+2\beta+\gamma=1$ and a translation of the interval $I$
such that the initial point $x_0$ is the origin.

\begin{dfntn}
Let $\varepsilon,l,c$ be real numbers satisfying
\[
\varepsilon\in(0,1)\,,\qquad
\max\{\varepsilon,1-\varepsilon\}<l<1\,,\qquad 0\in[c,c+l)=:I\,.
\]
The mapping
\begin{equation}\label{eq:1}
T(x)=\left\{
\begin{array}{lll}
x+1-\varepsilon &\text{ for }\ x\in[c,c+l-1+\varepsilon) &=:I_A\,,\\
x+1-2\varepsilon &\text{ for }\ x\in[c+l-1+\varepsilon,c+\varepsilon) &=:I_B\,,\\
x-\varepsilon &\text{ for }\ x\in[c+\varepsilon,c+l) &=:I_C\,,
\end{array}
\right.
\end{equation}
is called exchange of three intervals with permutation $(3,2,1)$.
\end{dfntn}

Note that the parameter $\varepsilon$ represents the length of the
interval $I_A\cup I_B$, and $1-\varepsilon$ corresponds to the
length of $I_B\cup I_C$. The number $l$ is the length of the
interval $I=I_A\cup I_B\cup I_C$.

The orbit of the point $x_0=0$ under the transformation $T$
of~\eqref{eq:1} can be coded by an infinite word $(u_n)_{n\in\xZ}$
in the alphabet $\{A,B,C\}$, where
\begin{equation}\label{eq:2}
u_n= \begin{cases}
A &\hbox{if }\ T^n(0)\in I_A\,,\\
B &\hbox{if }\ T^n(0)\in I_B\,,\\
C &\hbox{if }\ T^n(0)\in I_C\,,
\end{cases}
\quad \text{ for } n\in\xZ.
\end{equation}

The infinite word $(u_n)_{n\in\xZ}$ is non-periodic exactly in the case
that the parameter $\varepsilon$ is irrational. Words coding the
orbit of $0$ under an exchange of intervals with the permutation
$(3,2,1)$ and an irrational parameter $\varepsilon$ are called 3iet
words.

\subsection{Words and morphisms}

An alphabet $\calA$ is a finite set of symbols. In this paper we
shall systematically use the alphabet $\{A,B,C\}$ for 3iet words,
and the alphabet $\{0,1\}$ for Sturmian words. A finite word in
the alphabet $\calA$ is a concatenation $v=v_1v_2\cdots v_n$, where
$v_i\in\calA$ for all $i=1,2,\dots,n$. The length of the word $v$ is
denoted by $|v|=n$. The symbol $\calA^*$ denotes the set of all
finite words over $\calA$, including the empty word $\epsilon$.
Equipped with the operation of concatenation, $\calA^*$ is a monoid.
Sequences $u_0u_1u_2\cdots\in\calA^{\xN}$, $\cdots u_{-3}u_{-2}u_{-1}\in\calA^{\xZ_{<0}}$,
$\cdots u_{-3}u_{-2}u_{-1}|u_0u_1u_2\cdots\in\calA^{\xZ}$  are called
right-sided, left-sided and bidirectional infinite word,
respectively.

If for a finite word $w$ there exist (finite or infinite) words
$v^{(1)}$ and $v^{(2)}$ such that $v=v^{(1)}wv^{(2)}$, then $w$ is
said to be a factor of the (finite or infinite) word $v$. If
$v^{(1)}$ is the empty word, then $w$ is a prefix of $v$, if
$v^{(2)}=\epsilon$, then $w$ is a suffix of $v$. The set of all
factors of an infinite word $u$ is called the language of $u$ and
denoted ${\cal L}(u)$. Factors of $u$ of length $n$ form the set
$\calL_n(u)$; obviously $\calL_n(u)=\calL(u)\cap\calA^n$. The mapping
$\calC:\xN\to\xN$ given by the prescription $n\mapsto\#\calL_n(u)$ is
called the factor complexity of the infinite word $u$.

Infinite words $u$ such that the set $\{wv\in\calL(u)\mid w\hbox{ is
not a factor of }v\}$ is finite for every $w\in\calL(u)$ are called
uniformly recurrent. Right-sided Sturmian words are defined as
right-sided infinite words with factor complexity $\calC(n)=n+1$ for
all $n\in\xN$. Bidirectional Sturmian words are uniformly recurrent
bidirectional infinite words  satisfying $\calC(n)=n+1$ for all
$n\in\xN$.

For the factor complexity $\calC$ of a 3iet word it holds that
\begin{enumerate}
\item[(i)] either $\calC(n)=n+K$ for all sufficiently large $n$,
\item[(ii)] or $\calC(n)=2n+1$ for all $n\in\xN$.
\end{enumerate}
3iet words with complexity $\calC(n)=n+K$ belong to the set of the
so-called quasisturmian words, which are images of Sturmian words
under suitable morphisms. 3iet words with complexity $\calC(n)=2n+1$
are called non-degenerate 3iet words or regular 3iet words. The
factor complexity of a 3iet word is given by (i) or (ii) according
to the parameters $\varepsilon,l$: A 3iet word is non-degenerate
if and only if $l\notin\xZ[\varepsilon]:=\xZ+\varepsilon\xZ$,
see~\cite{adamczewski-jtnb-14}.

\medskip
A mapping $\xi:\calA^*\to{\mathcal B}^*$ satisfying
$\xi(wv)=\xi(w)\varphi(v)$ for all $w,v\in\calA^*$ is called
a morphism. A morphism is uniquely determined by the images
$\xi(a)$ of all letters $a\in\calA$. The action of a morphism can
be naturally extended to infinite words by
\begin{align*}
\xi(u_0u_1u_2\cdots) &= \xi(u_0)\xi(u_1)\xi(u_2)\cdots\,,\\
\xi(\cdots u_{-3}u_{-2}u_{-1}) &= \cdots\xi(u_{-3})\xi(u_{-2})\xi(u_{-1})\,,\\
\xi(\cdots u_{-3}u_{-2}u_{-1}|u_0u_1u_2\cdots) &=
\cdots\xi(u_{-3})\xi(u_{-2})\xi(u_{-1})|\xi(u_0)\xi(u_1)\xi(u_2)\cdots\,.
\end{align*}

With every morphism $\xi$ one can associate a matrix
$\mat{M}_\xi$. The matrix has $\#\calA$ rows and $\#{\mathcal B}$
columns, and
\[
(\mat{M}_\xi)_{ab} = \hbox{ number of letters $b$ in } \xi(a)\,.
\]
A morphism $\xi:\calA^*\to\calA^*$ is called primitive if some power of
the square matrix $\mat{M}_\xi$ has all elements positive. In
other words, there exists a positive integer $k$ such that for all
$a,b\in\calA$, the letter $a$ is a factor of the $k$-th iteration
$\xi^k(b)$.

An infinite word $u$ in $\calA^{\xN}$, $\calA^{\xZ_{<0}}$, $\calA^{\xZ}$ is
said to be a fixed point of a morphism $\xi:\calA^*\to\calA^*$, if
$\xi(u)=u$. It is obvious that if $u=u_0u_1u_2\cdots$ is a fixed
point of a primitive morphism $\xi$, then $\xi(u_0)=u_0w$ for a
non-empty word $w$, and $u$ is the limit of finite words
$\xi^n(a)$, which is usually denoted by
$\xi^\infty(a)=\lim_{n\to\infty}\xi^n(a)$. Analogous properties
must be satisfied by primitive morphisms fixing left-sided or
bidirectional infinite words.

Morphisms with the above properties are sometimes called
substitutions. It is quite obvious that the only non-primitive
morphism which can fix a 3iet word or a Sturmian word is the
identity. Therefore it is not misleading not to distinguish
between notions of primitive morphism and substitution when
speaking about substitution invariant Sturmian or 3iet words.

Substitution invariance of non-degenerate bidirectional 3iet words
has been studied in~\cite{balazi-masakova-pelantova-subst_inv}.
Similarly as in the case of Sturmian words, one needs the notion
of Sturm numbers. The original definition of a Sturm number uses
continued fractions. We cite the equivalent definition given
in~\cite{allauzen-jtnb-10}: A real number $\varepsilon\in(0,1)$ is
called a Sturm number, if it is a quadratic irrational with
algebraic conjugate $\varepsilon'\notin(0,1)$.

Let us cite here the characterization of substitution invariant
3iet words from~\cite{balazi-masakova-pelantova-subst_inv}.

\begin{thrm}[\cite{balazi-masakova-pelantova-subst_inv}]\label{thm:7}
Let $u$ be a non-degenerate 3iet word with parameters
$\varepsilon,l,c$. Then $u$ is invariant under a primitive
morphism if and only if
\begin{itemize}
\item 
  $\varepsilon$ is a Sturm number
\item
  $c,l\in\xQ(\varepsilon)$
\item
  $\min\{\varepsilon',1-\varepsilon'\} \leq \, -c' \, \leq
  \max\{\varepsilon',1-\varepsilon'\}$ and 
  $\min\{\varepsilon',1-\varepsilon'\} \leq  \, c'+l' \, \leq
  \max\{\varepsilon',1-\varepsilon'\}$,
  where $x'$ is the field conjugate of $x$ in $\xQ(\varepsilon)$.
\end{itemize}
\end{thrm}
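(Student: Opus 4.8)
The plan is to reduce the ternary problem to a pair of binary (Sturmian) problems by means of Theorem~\ref{thm:2}, and then to invoke the known characterization of substitution invariant Sturmian words due to Yasutomi~\cite{yasutomi-dm-2}. By Theorem~\ref{thm:2}, the non-degenerate 3iet word $u$ is invariant under a substitution if and only if \emph{both} Sturmian words $\sigma_{01}(u)$ and $\sigma_{10}(u)$ are substitution invariant; by the remark preceding the statement, for 3iet words ``invariant under a primitive morphism'' and ``substitution invariant'' coincide, since the only non-primitive morphism that can fix such a word is the identity. Thus the whole statement will follow once the three listed conditions are shown to be equivalent to the conjunction of the two Yasutomi conditions for $\sigma_{01}(u)$ and for $\sigma_{10}(u)$. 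Recall that Yasutomi's criterion asserts that a Sturmian word of slope $\theta$ and intercept $\rho$ is substitution invariant precisely when $\theta$ is a Sturm number, $\rho\in\xQ(\theta)$, and the field conjugate $\rho'$ lies in the closed interval with endpoints $\theta'$ and $1-\theta'$, that is $\min\{\theta',1-\theta'\}\le\rho'\le\max\{\theta',1-\theta'\}$.

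First I would pin down the slope of each Sturmian image. A direct computation of letter frequencies in $u$ from the interval lengths $|I_A|=l-1+\varepsilon$, $|I_B|=1-l$, $|I_C|=l-\varepsilon$ (normalized by $|I_A|+2|I_B|+|I_C|=1$) shows that the frequency of the letter $0$ in $\sigma_{01}(u)$ equals $\varepsilon$, while the frequency of $0$ in $\sigma_{10}(u)$ equals $1-\varepsilon$; hence the two images have slopes $\varepsilon$ and $1-\varepsilon$, respectively. Since $\varepsilon'\notin(0,1)$ is equivalent to $1-\varepsilon'\notin(0,1)$, a quadratic irrational $\varepsilon$ is a Sturm number exactly when $1-\varepsilon$ is, so the Sturm-number requirement for both images collapses to the single condition that $\varepsilon$ be a Sturm number. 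Moreover the Yasutomi admissible interval $[\min\{\theta',1-\theta'\},\max\{\theta',1-\theta'\}]$ is invariant under $\theta\mapsto 1-\theta$, so the \emph{same} interval $[\min\{\varepsilon',1-\varepsilon'\},\max\{\varepsilon',1-\varepsilon'\}]$ governs the intercept conjugates of both $\sigma_{01}(u)$ and $\sigma_{10}(u)$. This already accounts for the first bullet and for the shape of the interval appearing in the third bullet.

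The core of the argument is the identification of the two intercepts. Using the geometric representation of the fixed point recalled in Section~\ref{sec:preli}, I would realize $\sigma_{01}(u)$ and $\sigma_{10}(u)$ as codings of the rotation by $\varepsilon$ (respectively $1-\varepsilon$) associated with the two endpoints of the defining interval $I=[c,c+l)$, suitably normalized. Tracking how the point $0$ and the cut points $c+l-1+\varepsilon$, $c+\varepsilon$ transform, one finds that the intercept of $\sigma_{01}(u)$ can be chosen so that its conjugate equals $-c'$, while the intercept of $\sigma_{10}(u)$ has conjugate $c'+l'$. The requirement $\rho\in\xQ(\theta)$ for both then reads $c,l\in\xQ(\varepsilon)$, giving the second bullet, and the two interval-membership conditions become exactly $\min\{\varepsilon',1-\varepsilon'\}\le -c'\le\max\{\varepsilon',1-\varepsilon'\}$ and $\min\{\varepsilon',1-\varepsilon'\}\le c'+l'\le\max\{\varepsilon',1-\varepsilon'\}$, which is precisely the third bullet.

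I expect the main obstacle to be exactly this last identification of the intercepts and of their conjugates: the geometric passage from the 3iet parameters $(\varepsilon,l,c)$ to the slope--intercept data of the two Sturmian codings must be carried out with care about the choice of representatives modulo $1$, since it is the Galois conjugate (not the intercept itself) that enters the condition, and an additive ambiguity of an integer in $\rho$ shifts $\rho'$ by the same integer. Two further technical points must be handled: the non-degeneracy hypothesis $l\notin\xZ[\varepsilon]$ is what ensures that $u$ is a genuine regular 3iet word, so that Theorem~\ref{thm:2} is applicable; and the closed-versus-open nature of the Yasutomi interval at its endpoints must be checked to match the non-strict inequalities of the third bullet.
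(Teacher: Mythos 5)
The paper does not actually prove Theorem~\ref{thm:7}: it is imported verbatim from \cite{balazi-masakova-pelantova-subst_inv}, so there is no in-paper proof to measure your argument against. Your proposed derivation is, however, essentially the computation the paper itself performs in Section~\ref{sec:ternarizace}, run in the opposite logical direction. The paper determines that $\sigma_{01}(u)$ and $\sigma_{10}(u)$ are the Sturmian codings with slope $\varepsilon$ and intercepts $-c$ and $-c-l+1$ respectively, and then remarks that Theorem~\ref{thm:7} combined with Yasutomi's Theorem~\ref{thm:9} \emph{yields} the equivalence stated in Theorem~\ref{thm:2}; you instead take Theorem~\ref{thm:2} as given and recover Theorem~\ref{thm:7}. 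Your bookkeeping is consistent with the paper's: your assignment of slope $1-\varepsilon$ to $\sigma_{10}(u)$ differs from the paper's ``slope $\varepsilon$'' only by the exchange of the letters $0$ and $1$, and the conjugate of the paper's intercept $-c-l+1$ is $1-(c'+l')$, which satisfies the Yasutomi inequalities exactly when $c'+l'$ does, as the remark following Theorem~\ref{thm:9} records. So the identifications you flag as the main obstacle do come out as you predict.

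The one substantive weakness is the risk of circularity. Theorem~\ref{thm:2} and Theorem~\ref{thm:7} are both taken from the same source \cite{balazi-masakova-pelantova-subst_inv}, and the present paper's own commentary suggests that the equivalence of Theorem~\ref{thm:2} is \emph{deduced from} Theorem~\ref{thm:7} (plus Theorem~\ref{thm:9}) via precisely the slope--intercept computation you rely on. If that is how Theorem~\ref{thm:2} is established in the source, then your argument assumes what it sets out to prove. As a derivation internal to the present paper, treating Theorem~\ref{thm:2} as an independent black box, your proof is sound and complete; as a self-contained proof of the characterization it is not, and you would need either an independent proof of Theorem~\ref{thm:2} or a direct argument (e.g.\ via the geometric self-similarity of the orbit closure, as in \cite{balazi-masakova-pelantova-subst_inv}) to break the cycle.
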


\subsection{Geometric representation of a fixed point of a morphisms}

It is useful to reformulate the task of searching for a
substitution fixing a given infinite word
$u=(u_n)_{n\in\xZ}\in\calA^{\xZ}$ in geometric terms. Let us associate
with letters of the alphabet mutually distinct lengths by an
injective mapping $\ell:\calA\to(0,+\infty)$. Then, with the infinite
word $u$ we associate a strictly increasing sequence
$(t_n)_{n\in\xZ}$ such that
\[
t_0=0\quad\text{ and }\quad t_{n+1}-t_n=\ell(u_n)\ \text{ for all
$n\in\xZ$.}
\]
A number $\Lambda>1$ satisfying
\[
\Lambda\Sigma := \{\Lambda t_n\mid n\in\xZ\} \ \subset \ \{t_n\mid
n\in\xZ\} = :\Sigma\,,
\]
is called a self-similarity factor of the sequence
$(t_n)_{n\in\xZ}$. Let us suppose that the assignment of lengths
$\ell$ and the self-similarity factor $\Lambda$ satisfy that to
every $a\in\calA$ there exists a finite set $P_a\subset(0,+\infty)$
such that
\begin{equation}\label{eq:vlastnostV}
[\Lambda t_n,\Lambda t_{n+1}]\cap \Sigma \ = \ \Lambda t_n +
P_a\qquad\text{ for all $n\in\xZ$ with $u_n=a$.}
\end{equation}
It means that the gap between $t_n$ and $t_{n+1}$ is after
stretching by $\Lambda$ filled by members of the original sequence
$(t_n)_{n\in\xZ}$ in the same way for all gaps corresponding to the
letter $a$.
An infinite word $u$ for which one can find a mapping
$\ell$ and a factor $\Lambda$ with the above described properties
is obviously invariant under a substitution $\xi$, where the
image $\xi(a)$ is determined by the distances between
consecutive elements of the set $P_a$.
We call the set $\{t_n\mid n\in\xZ\}$ with the
property~\eqref{eq:vlastnostV} the geometric representation of the
word $u$ with the factor $\Lambda$.

On the other hand, if an infinite word $u$ is invariant under a
primitive substitution $\xi$ with the matrix $\mat{M}_\xi$, then
the eigenvector of $\mat{M}_\xi$ corresponding to the dominant
eigenvalue $\Lambda$ is a column of length $\#\calA$ with all
components $x_a$, $a\in\calA$, positive, cf.~\cite{fielder-matice}.
The correspondence $\ell:a\to x_a$ results in a sequence
$(t_n)_{n\in\xZ}$ having $\Lambda$ as its self-similarity factor
and satisfying~\eqref{eq:vlastnostV}. Therefore the set $\{t_n\mid
n\in\xZ\}$ is the geometric representation of the infinite word $u$
with the factor $\Lambda$. We illustrate the concept of the
geometric representation in Figure~\ref{f}.

\begin{figure}[!ht]
\begin{center}
\includegraphics[scale=0.78]{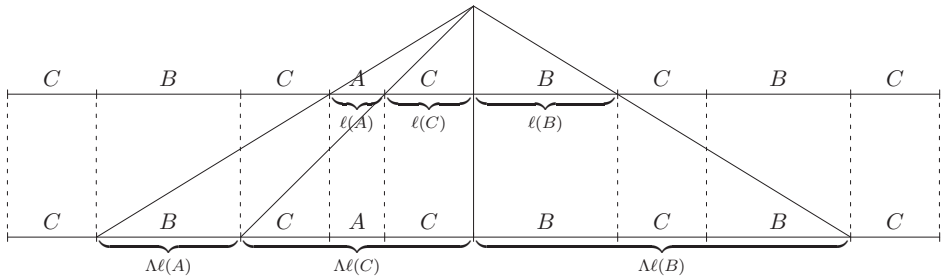}
\end{center}
\caption{Geometric representation of a ternary word $u$ fixed by a
substitution $\eta$ with the self-similarity factor $\Lambda$. In
our example, $\eta(A)=B$, $\eta(B)=BCB$, $\eta(C)=CAC$.} \label{f}
\end{figure}

In~\cite{abmp}, the authors derive (in their Corollaries 7.1 and 7.2)
several properties of matrices of substitutions fixing a 3iet
word.

\begin{thrm}[\cite{abmp}]\label{thm:8}
Let $u$ be a non-degenerate 3iet word with parameters
$\varepsilon,l,c$ which is invariant under a primitive
substitution $\eta$. Then for the dominant eigenvalue $\Lambda$ of
the matrix $\mat{M}_{\eta}$ one has
\begin{enumerate}
\item $\Lambda$ is a quadratic unit;

\item $(1-\varepsilon,1-2\varepsilon,-\varepsilon)^T$ is the
 right eigenvector of $\mat{M}_{\eta}$, corresponding to $\Lambda'$
the algebraic conjugate of $\Lambda$.
\end{enumerate}
\end{thrm}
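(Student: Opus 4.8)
The plan is to read off assertion~(2) from the geometry of the interval exchange and then to deduce assertion~(1) by elementary arithmetic in the quadratic field $\xQ(\varepsilon)$. Write $v:=(1-\varepsilon,1-2\varepsilon,-\varepsilon)^{T}$ for the vector of translation lengths of $T$, so that the orbit $x_n:=T^n(0)$ satisfies $x_{n+1}-x_n=v_{u_n}$ (the $n$-th jump is the component of $v$ indexed by the letter $u_n$). Since $\eta(u)=u$, I decompose $\xZ$ into consecutive blocks, the $k$-th block being the occurrence of $\eta(u_k)$; let $L_k$ be its left endpoint, with $L_0=0$. Summing the jumps over one block and using that row $u_k$ of $\mat{M}_\eta$ is the Parikh vector of $\eta(u_k)$ gives the master identity
\[
x_{L_{k+1}}-x_{L_k}=\sum_{b}(\mat{M}_\eta)_{u_k b}\,v_b=(\mat{M}_\eta v)_{u_k}\qquad(k\in\xZ).
\]

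The heart of the argument is to show that substitution invariance of $u$ is the same thing as a metric self-similarity of the coded orbit: there is a ratio $\rho$ with $0<|\rho|<1$ such that $x_{L_k}=\rho\,x_k$ for all $k$. Geometrically this says that the first return map of $T$ to the origin-centred subinterval $\rho I$, rescaled by $\rho^{-1}$, is again $T$, and that the $\eta$-images are precisely the corresponding return words; the combinatorial fixed-point equation $\eta(u)=u$ is thereby converted into self-similarity of the interval exchange. Granting this, the master identity yields $(\mat{M}_\eta v)_{u_k}=\rho(x_{k+1}-x_k)=\rho\,v_{u_k}$ for every $k$, and since a non-degenerate 3iet word contains all three letters we conclude $\mat{M}_\eta v=\rho v$, so that $v$ is a right eigenvector of $\mat{M}_\eta$. (As a consistency check, the interval lengths $f=(l-1+\varepsilon,\,1-l,\,l-\varepsilon)$ give, up to normalisation, the left Perron eigenvector of $\mat{M}_\eta$ by unique ergodicity, and one verifies directly that $f\cdot v=0$, exactly as must hold for an eigenvector belonging to an eigenvalue different from $\Lambda$.) Establishing the self-similarity $x_{L_k}=\rho x_k$ — that the block endpoints are precisely the visits of the orbit to $\rho I$ — is the main obstacle; it is the content carried by the geometric representation and the unique ergodicity of $T$.

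It remains to identify $\rho$ with $\Lambda'$ and to extract the arithmetic. By Theorem~\ref{thm:7}, $\varepsilon$ is a Sturm number and $l\in\xQ(\varepsilon)$, so $K:=\xQ(\varepsilon)$ is a real quadratic field and $v\in K^{3}$; from $\mat{M}_\eta v=\rho v$ with $\mat{M}_\eta$ integral we get $\rho=(\mat{M}_\eta v)_A/v_A\in K$. First, $\Lambda\notin\xQ$: a rational Perron value would force its left eigenvector $f$ to lie on a rational line, whereas $(l-1+\varepsilon):(1-l):(l-\varepsilon)$ is rational only when $l=\varepsilon$, which is excluded by $l>\varepsilon$. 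Next, $\rho\notin\xQ$: writing $v=p+\varepsilon q$ with $p=(1,1,0)^{T}$, $q=(-1,-2,-1)^{T}$, a rational $\rho$ would (matching rational and irrational parts) make both $p$ and $q$ eigenvectors for $\rho$, so $\rho$ would be a repeated eigenvalue and the simple Perron value $\Lambda$ would be the remaining rational root — contradicting $\Lambda\notin\xQ$. Hence $\rho$ is a quadratic irrational, its field conjugate $\rho'$ is the other root of its monic integral minimal polynomial (which divides the characteristic polynomial of $\mat{M}_\eta$), and the third eigenvalue is a rational integer $m$. Since $\Lambda$ is irrational we have $\Lambda\in\{\rho,\rho'\}$, and as the Perron eigenvector is strictly positive while $v$ is not (its last component is $-\varepsilon<0$) we get $\Lambda\neq\rho$, whence $\Lambda=\rho'$ and $\rho=\Lambda'$. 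This proves~(2) and shows that $\Lambda$ has degree two.

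Finally, the unit property reduces to $\det\mat{M}_\eta=\pm1$: from the factorisation of the characteristic polynomial $\det\mat{M}_\eta=\Lambda\Lambda'm=N_{K/\xQ}(\Lambda)\,m$ with $m\in\xZ$, so $\det\mat{M}_\eta=\pm1$ forces $N_{K/\xQ}(\Lambda)=\Lambda\Lambda'=\pm1$, which is exactly the statement that the algebraic integer $\Lambda$ is a unit; together with degree two this gives~(1). The unimodularity I would obtain from the Sturmian side: by Theorems~\ref{thm:1} and~\ref{thm:2} both $\sigma_{01}(u)$ and $\sigma_{10}(u)$ are substitution-invariant Sturmian words, hence fixed by morphisms with unimodular incidence matrices, and transporting this through $\sigma_{01},\sigma_{10}$ should pin $\det\mat{M}_\eta$ to $\pm1$. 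I expect the two genuinely hard points to be (i) the metric self-similarity used in the second paragraph and (ii) this transfer of unimodularity; both hinge on the tight correspondence between $\eta$ and unimodular Sturmian morphisms, and the passage to $\eta^{2}$ already present in Theorem~\ref{thm:001} signals the parity subtlety one must watch in~(ii).
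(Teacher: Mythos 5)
The first thing to note is that the paper does not prove this statement at all: Theorem~\ref{thm:8} is quoted from Corollaries~7.1 and~7.2 of~\cite{abmp}, so there is no internal proof to compare yours against, and your proposal must stand on its own. Judged that way, it is a correct linear-algebraic reduction of the theorem to two facts that you state but do not establish, and those two facts are the entire mathematical content of the result. The first is the self-similarity $x_{L_k}=\rho x_k$. Everything you build around it is fine: the master identity, the deduction $\mat{M}_\eta v=\rho v$ from the occurrence of all three letters, the exclusion $\rho\notin\xQ$ via $v=p+\varepsilon q$, and the identification $\Lambda=\rho'$ from positivity of the Perron eigenvector. But the self-similarity is, given your master identity, \emph{equivalent} to the statement that $v$ is an eigenvector of $\mat{M}_\eta$ (sum the identity over $k$ for one direction, your incremental argument for the other), so asserting it is asserting item~(2). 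Proving it is where the real work lies: one needs the arithmetic description of the orbit $\{T^n(0)\}$ inside $\xZ[\varepsilon]$ and its conjugate (cut-and-project) picture, which is where non-degeneracy $l\notin\xZ[\varepsilon]$ and the Sturm-number property of $\varepsilon$ actually enter; none of that appears in your argument. A smaller but genuine error: your justification of $\Lambda\notin\xQ$ (``rational only when $l=\varepsilon$'') is wrong as stated; the correct deduction is that rationality of the ratios of $(l-1+\varepsilon,\,1-l,\,l-\varepsilon)$ forces first $l\in\xQ$ and then $\varepsilon\in\xQ$, contradicting irrationality of $\varepsilon$.

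The second gap is the unimodularity $\det\mat{M}_\eta=\pm1$, from which you correctly derive the unit property. The transfer from the Sturmian side cannot work as you sketch it: Theorems~\ref{thm:1} and~\ref{thm:2} only say that $\sigma_{01}(u)$ and $\sigma_{10}(u)$ are fixed by \emph{some} unimodular Sturmian morphisms; they give no relation between those morphisms' incidence matrices and $\mat{M}_\eta$. The intertwining $\varphi\circ\sigma_{01}=\sigma_{01}\circ\eta$ (up to passing to $\eta^2$) that would let you compare determinants is precisely Theorem~\ref{thm:001} of the present paper, whose proof, via Lemma~\ref{l:9}, already invokes Theorem~\ref{thm:8}; so within this paper's logical order that route is circular, and outside it the transfer remains a substantial unproved claim. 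In short, the skeleton is the right one and you are candid about where the difficulties sit, but the two steps you flag as ``hard points'' are not deferred technicalities --- they are the theorem, and the proposal proves neither.
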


Item (2) of the above theorem implies for the matrix $\mat{M}_\eta$
that ${\mathbf v}:=(1-\varepsilon',1-2\varepsilon',-\varepsilon')^T$ is its right
eigenvector corresponding to the dominant
eigenvalue $\Lambda$. Using Theorem~\ref{thm:7}, the parameter
$\varepsilon$ is a Sturm number, and so $\varepsilon'\notin(0,1)$.
The vector ${\mathbf v}$ has thus all components positive or all
components negative. In any case, in the geometric representation
of the fixed point of the substitution $\eta$ of Theorem~\ref{thm:8}, the length
$\ell(B)$ corresponding to the letter $B$ is the sum $\ell(A)+\ell(C)$.

\section{Amicable morphisms}\label{sec:amicable}

The narrow connection of $3$iet words and Sturmian words and their invariance
under morphisms is described in Theorems~\ref{thm:1} and~\ref{thm:2} by means of morphisms
$\sigma_{01},\sigma_{10}$, see~\eqref{eq:1}. These morphisms also allow us the description
of morphisms fixing a 3iet word using Sturmian morphisms. For that, several notions need to be defined.

\begin{dfntn}\label{d:amicablewords}
  Let $u,v$ be finite or infinite words over the alphabet $\{0,1\}$. We say that $u$ is amicable to
  $v$, and denote it by $u\propto v$, if there exist a ternary word $w$ over $\{A,B,C\}$ such that
  $u=\sigma_{01}(w)$ and $v=\sigma_{10}(w)$.
  In such a case we denote $w\coloneq\mathrm{ter}(u,v)$ and say that $w$ is the
  ternarization of $u$ and $v$.
\end{dfntn}

Note that the relation $\propto$ is not symmetric. For example,
$u=01$ is amicable to $v=10$, but not vice versa. It is also
interesting to notice that if two finite words $u,v$ satisfy
$u\propto v$, then they are of the same length and the number of
letters $a$ in $u$ and $v$ are equal for both $a=0,1$.

Figure~\ref{fff} illustrates an easy way how to recognize
amicability of two words and how to construct their ternarization.
According to the definition,  $u\propto v$ if $u$ can be written
as a concatenation $u=u^{(1)}u^{(2)}u^{(3)}\cdots$ and $v$ as a
concatenation $v=v^{(1)}v^{(2)}v^{(3)}\cdots$ such that for all
$i=1,2,3,\dots$ we have either $u^{(i)}=v^{(i)}=0$ or
$u^{(i)}=v^{(i)}=1$ or $u^{(i)}=01$ and $v^{(i)}=10$. The
ternarization $w$ is then constructed by associating letters in
the alphabet $\{A,B,C\}$ to the blocks, namely it associates $A$,
if $u^{(i)}=v^{(i)}=0$; it gives $C$ if $u^{(i)}=v^{(i)}=1$, and
it gives $B$, if $u^{(i)}=01$ and $v^{(i)}=10$.

\begin{figure}
\begin{equation}\label{e:f}
\begin{gathered}
  u = \quad\\ v =\quad \\[1mm] w =\quad
\end{gathered}
\bA\ \bC\ \bA\ \bAC\ \bA\ \bC
\end{equation}
\caption{Finite words $u=0100101$ and $v=0101001$ satisfy $u\propto v$ and their ternarization is equal to
$w=\mathrm{ter}(u,v)=ACABAC$.}
\label{fff}
\end{figure}

We introduce the notion of amicability and ternarization also for morphisms.

\begin{dfntn}\label{d:amicablemorphisms}
  Let $\varphi,\psi:\{0,1\}^*\rightarrow\{0,1\}^*$ be two morphisms.
  We say that $\varphi$ is amicable to $\psi$, and denote it by
  $\varphi\propto\psi$, if the three following
  relations hold
  \begin{equation}
  \begin{split}
    \varphi(0)&\propto\psi(0)\,,\\
    \varphi(1)&\propto\psi(1)\,,\\
    \varphi(01)&\propto\psi(10)\,.
  \end{split}
  \end{equation}
  The morphism $\eta:\{A,B,C\}^*\to\{A,B,C\}^*$ given by
\begin{align*}
\eta(A) & \coloneq\mathrm{ter}(\varphi(0),\psi(0))\,, \\
\eta(B) & \coloneq\mathrm{ter}(\varphi(01),\psi(10))\,, \\
\eta(C) & \coloneq\mathrm{ter}(\varphi(1),\psi(1))\,,
\end{align*}
is called the ternarization of $\varphi$ and $\psi$ and denoted by
$\eta\coloneq\mathrm{ter}(\varphi,\psi)$.
\end{dfntn}

As an example, consider two basic Sturmian morphisms $\varphi$,
$\psi$ from~\eqref{e:ff},
\[
\varphi: \
\begin{aligned}
0 &\mapsto 01\\
1 &\mapsto 0
\end{aligned}\,,\qquad
\psi: \
\begin{aligned}
0 &\mapsto 10\\
1 &\mapsto 0
\end{aligned}\,.
\]
It can be easily checked that $\varphi\propto\psi$ and that their
ternarization $\eta=\mathrm{ter}(\varphi,\psi)$ is of the form
\begin{equation}\label{e:fff}
\eta:\
\begin{aligned}
A & \ \mapsto\ \mathrm{ter}(01,10) =B\,,\\
B & \ \mapsto\ \mathrm{ter}(010,010)=ACA\,,\\
C & \ \mapsto\ \mathrm{ter}(0,0)=A\,.
\end{aligned}
\end{equation}

From the definition of amicability of words it follows that if
$u\propto v$ and $u'\propto v'$ then for their concatenation we
have $uu'\propto vv'$. As a simple consequence of this idea, we
have the following lemma.

\begin{lmm}\label{lem:amicable-induction}
  Let $u,v$ be two (finite or infinite) words over $\{0,1\}$ such that $u\propto v$,
  and let $\varphi,\psi:\{0,1\}^*\rightarrow\{0,1\}^*$ be two morphisms such that
  $\varphi\propto\psi$.
  Then $\varphi(u)\propto\psi(v)$. Moreover, if $w=\mathrm{ter}(u,v)$, then
  $\mathrm{ter}(\varphi(u),\psi(v))=\eta(w)$, where $\eta=\mathrm{ter}(\varphi,\psi)$.
\end{lmm}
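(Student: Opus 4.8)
The plan is to establish both claims by first reducing them to the finite case via a block-decomposition argument, then handling finite words directly from the definitions.

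\medskip

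\noindent\textbf{Setup and the finite case.} Suppose first that $u,v$ are finite with $u\propto v$ and $w=\mathrm{ter}(u,v)$. By the block description following Definition~\ref{d:amicablewords}, we may write $w=w_1w_2\cdots w_k$ with each $w_j\in\{A,B,C\}$, and correspondingly
\[
u=\sigma_{01}(w_1)\sigma_{01}(w_2)\cdots\sigma_{01}(w_k),\qquad
v=\sigma_{10}(w_1)\sigma_{10}(w_2)\cdots\sigma_{10}(w_k).
\]
Applying $\varphi$ and $\psi$ respectively and using that they are morphisms, I get $\varphi(u)=\varphi(\sigma_{01}(w_1))\cdots\varphi(\sigma_{01}(w_k))$ and similarly for $\psi(v)$. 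The key observation is that for each single block $w_j$ the pair $\bigl(\varphi(\sigma_{01}(w_j)),\psi(\sigma_{10}(w_j))\bigr)$ is precisely one of the three amicable pairs $\varphi(0)\propto\psi(0)$, $\varphi(1)\propto\psi(1)$, or $\varphi(01)\propto\psi(10)$ guaranteed by $\varphi\propto\psi$, and its ternarization is exactly $\eta(w_j)$ by the definition of $\eta=\mathrm{ter}(\varphi,\psi)$. Then I invoke the concatenation property of amicability stated just before the lemma (if $u\propto v$ and $u'\propto v'$ then $uu'\propto vv'$, and ternarizations concatenate): this gives both $\varphi(u)\propto\psi(v)$ and $\mathrm{ter}(\varphi(u),\psi(v))=\eta(w_1)\eta(w_2)\cdots\eta(w_k)=\eta(w)$.

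\medskip

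\noindent\textbf{The infinite case.} For right-sided infinite $u=u_0u_1u_2\cdots$ (the left-sided and bidirectional cases being entirely analogous), amicability $u\propto v$ means $v=v_0v_1v_2\cdots$ and $w=\mathrm{ter}(u,v)=w_0w_1w_2\cdots$ is an infinite ternary word with $u=\sigma_{01}(w)$, $v=\sigma_{10}(w)$. I would argue by passing to prefixes: for every $n$, the prefix $w^{(n)}:=w_0\cdots w_{n-1}$ is finite, its image $\sigma_{01}(w^{(n)})$ is a prefix of $u$ and $\sigma_{10}(w^{(n)})$ a prefix of $v$, so the finite case yields $\varphi(\sigma_{01}(w^{(n)}))\propto\psi(\sigma_{10}(w^{(n)}))$ with ternarization $\eta(w^{(n)})$. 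Letting $n\to\infty$, these prefixes exhaust $\varphi(u)$, $\psi(v)$, and $\eta(w)$ respectively (since morphisms commute with taking limits of prefixes), giving $\varphi(u)\propto\psi(v)$ with ternarization $\eta(w)$.

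\medskip

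\noindent\textbf{Main obstacle.} I expect the principal subtlety to lie not in the algebra of concatenation but in verifying that the block decomposition is \emph{well-defined and consistent} when passing to infinite words — specifically, that the ternarization of an infinite amicable pair is unambiguous, so that the prefix-limit argument actually converges to the correct infinite word. Because $B$ arises from the overlapping pattern $(01,10)$ while $A,C$ arise from single letters, one must check that the greedy block-parsing that produces $w$ cannot be disrupted at block boundaries under the morphism images; this is exactly what the concatenation property encapsulates, so the cleanest route is to lean entirely on that property and on the finite case, keeping the infinite case a routine prefix-limit passage rather than re-deriving the parsing.
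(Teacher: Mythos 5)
Your proof is correct and follows essentially the same route as the paper, which offers no explicit argument beyond the remark that amicability (with ternarization) is preserved under concatenation; your blockwise decomposition of $w$, the observation that each block maps to one of the three amicable pairs in the definition of $\varphi\propto\psi$, and the prefix-limit passage to infinite words simply make that remark precise. (Note only that you correctly read $\sigma_{10}$ off the block description and Figure~2, i.e.\ $\sigma_{10}(A)=0$, $\sigma_{10}(C)=1$, rather than the displayed equation~\eqref{eq:002}, which contains a typo.)
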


\begin{rmrk}
Note that if $\varphi\propto\psi$ and $\eta=\mathrm{ter}(\varphi,\psi)$, then
\[
\varphi: \
\begin{aligned}
0 & \mapsto \sigma_{01}\eta (A) \\
1 & \mapsto \sigma_{01}\eta (C)
\end{aligned}
\qquad\text{and}\qquad
\psi: \
\begin{aligned}
0 & \mapsto \sigma_{10}\eta (A) \\
1 & \mapsto \sigma_{10}\eta (C)
\end{aligned}\,.
\]
\end{rmrk}

%
%

\begin{thrm}\label{thm:xy}
Let $\varphi,\psi:\{0,1\}^*\rightarrow\{0,1\}^*$ be two primitive
Sturmian morphisms having fixed points such that
$\varphi\propto\psi$. Then the morphism
$\eta:\{A,B,C\}^*\rightarrow\{A,B,C\}^*$ given by
$\eta=\mathrm{ter}(\varphi,\psi)$
has a 3iet fixed point.
\end{thrm}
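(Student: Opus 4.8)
The plan is to exhibit a fixed point of $\eta$ of the form $w=\mathrm{ter}(u,v)$, where $u$ and $v$ are fixed points of $\varphi$ and $\psi$ respectively, and to read off its $3$iet character from Theorem~\ref{thm:1}. The reduction is immediate from Lemma~\ref{lem:amicable-induction}: if I can find Sturmian words $u,v$ with $\varphi(u)=u$, $\psi(v)=v$ and $u\propto v$, then for $w=\mathrm{ter}(u,v)$ the lemma gives $\eta(w)=\mathrm{ter}(\varphi(u),\psi(v))=\mathrm{ter}(u,v)=w$, so $w$ is a fixed point of $\eta$. By the definition of ternarization, $\sigma_{01}(w)=u$ and $\sigma_{10}(w)=v$, and being fixed points of the Sturmian morphisms $\varphi,\psi$ these are Sturmian words; Theorem~\ref{thm:1} then yields that $w$ is a $3$iet word. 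Thus the whole statement reduces to producing amicable fixed points of $\varphi$ and $\psi$.

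To build them I would find a single letter $X\in\{A,B,C\}$ with $\eta(X)$ beginning with $X$ and set $w=\eta^{\infty}(X)=\lim_{n}\eta^{n}(X)$. Since $X=\mathrm{ter}(\sigma_{01}(X),\sigma_{10}(X))$ by definition, iterating Lemma~\ref{lem:amicable-induction} yields $\eta^{n}(X)=\mathrm{ter}\bigl(\varphi^{n}(\sigma_{01}(X)),\psi^{n}(\sigma_{10}(X))\bigr)$, whence $\sigma_{01}(\eta^{n}(X))=\varphi^{n}(\sigma_{01}(X))$ and $\sigma_{10}(\eta^{n}(X))=\psi^{n}(\sigma_{10}(X))$. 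The seed condition guarantees that $\varphi$ fixes the first letter of $\sigma_{01}(X)$ and $\psi$ fixes the first letter of $\sigma_{10}(X)$, so these two sequences converge to fixed points $u:=\sigma_{01}(w)$ of $\varphi$ and $v:=\sigma_{10}(w)$ of $\psi$; moreover $w=\mathrm{ter}(u,v)$ certifies $u\propto v$, and the first paragraph finishes the argument. As a consistency check, amicable words carry equal numbers of $0$'s and $1$'s, so $\varphi\propto\psi$ forces $\mat{M}_\varphi=\mat{M}_\psi$, and $u,v$ come out as Sturmian words of a common slope.

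It remains to secure a letter $X$ with $\eta(X)$ beginning with $X$, and this is the step that uses all three amicability relations together with the existence of fixed points of $\varphi$ and $\psi$. From the block description of ternarization one reads off: $\eta(A)$ begins with $A$ iff $\varphi(0)$ and $\psi(0)$ both begin with $0$; $\eta(C)$ begins with $C$ iff $\varphi(1)$ and $\psi(1)$ both begin with $1$; and $\eta(B)$ begins with $B$ iff $\varphi(0)$ begins with $0$ and $\psi(1)$ begins with $1$. The last equivalence is the delicate one: here $\varphi(0)\varphi(1)$ begins with $0$ and $\psi(1)\psi(0)$ with $1$, and the hypothesis $\varphi(01)\propto\psi(10)$ forces the first block of $\mathrm{ter}(\varphi(01),\psi(10))$ to be the unique block of type $(01,10)$, namely $B$. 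I would then run a short case analysis on the first letters of $\varphi(0),\varphi(1),\psi(0),\psi(1)$: the existence of a fixed point of $\varphi$ means $\varphi(0)$ begins with $0$ or $\varphi(1)$ begins with $1$, likewise for $\psi$, while the relations $\varphi(0)\propto\psi(0)$ and $\varphi(1)\propto\psi(1)$ forbid the configuration in which $\varphi(\cdot)$ begins with $1$ and $\psi(\cdot)$ begins with $0$. Checking the few remaining possibilities shows that one of the seeds $A$, $B$, $C$ always applies.

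I expect the main obstacle to be exactly this last bookkeeping: verifying that the fixed-point hypotheses on $\varphi$ and $\psi$, combined with the three amicability constraints, leave no configuration of first letters without an admissible seed, and justifying the $B$-seed equivalence through the block structure of the ternarization. Once an admissible seed is in hand, the limit construction together with Theorem~\ref{thm:1} delivers the $3$iet fixed point with no further difficulty.
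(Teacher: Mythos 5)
Your proposal is correct and follows essentially the same route as the paper: the paper also builds the fixed point as $\mathrm{ter}(u,v)$ for amicable fixed points $u,v$ of $\varphi,\psi$ obtained by iterating from a suitable seed, invokes Lemma~\ref{lem:amicable-induction} to get $\eta(w)=w$, and concludes via Theorem~\ref{thm:1}. The only (cosmetic) difference is that you organize the case analysis around a starting letter $X\in\{A,B,C\}$ for $\eta$, whereas the paper phrases the same three cases as a common starting letter $X\in\{0,1\}$ for $\varphi$ and $\psi$ (your seeds $A$ and $C$) or the amicable pair $(01,10)$ (your seed $B$), with the same first-letter bookkeeping.
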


\begin{proof}
The first step is to prove that a fixed point of $\varphi$, say
$u$, is amicable to a fixed point of $\psi$, say $v$. We prove the
statement for right-sided words only, the proof for left-sided and
bidirectional fixed points follows the same lines. We will discuss
two separate cases.

\smallskip

\noindent\emph{Case A.} Let there exists a letter $X\in\{0,1\}$ such that
$\varphi(X)$ starts with $X$ and $\psi(X)$ starts with $X$.
Primitivity of $\varphi$ and $\psi$ implies that both $\varphi(X)$ and $\psi(X)$ have at
least two letters.
Therefore
\[
u=\lim_{k\rightarrow\infty}\varphi^k(X)\qquad\text{and}\qquad
v=\lim_{k\rightarrow\infty}\psi^k(X)\,.
\]
Since $X\propto X$ we have $u\propto v$ by Lemma~\ref{lem:amicable-induction}.

\smallskip

\noindent\emph{Case B.} Let the negation of Case A hold.
\begin{enumerate}[a)]
\item
  Let $\varphi(1)$ start with $1$. Then necessarily $\psi(1)$ starts with $0$ which is in contradiction
  with $\varphi(1)\propto\psi(1)$.
\item
  Let $\varphi(1)$ start with $0$. Since $\varphi$ has a fixed point, $\varphi(0)$ must start with $0$.
  Thus $\psi(0)$ does not start with $0$, which implies that $\psi(1)$ starts with $1$ since $\psi$ also
  has a fixed point. \par
  \noindent Consider $\varphi(01)$ and $\psi(10)$. Clearly, $\varphi(01)= \varphi(0)\varphi(1)$ starts with $0$ and
  $ \psi(10)=\psi(1)\psi(0)$ starts with $1$.
  Moreover, since $\varphi(01)\propto\psi(10)$, the word $\varphi(01)$ must have the prefix $01$ and
  the word $\psi(10)$ must have the prefix $10$. Therefore $u=\lim_{k\rightarrow\infty}\varphi^k(01)$
  and $v=\lim_{k\rightarrow\infty}\psi^k(10)$. Now $01\propto 10$ and therefore by Lemma~\ref{lem:amicable-induction},
  it follows that $u\propto v$.
\end{enumerate}

We have shown in all cases that the fixed points $u,v$ of the
Sturmian morphisms $\varphi\propto\psi$ satisfy $u\propto v$.
Moreover, Lemma~\ref{lem:amicable-induction} implies that if
$w=\mathrm{ter}(u,v)$, then $w=\eta(w)$, i.e.\ $w$ is the fixed
point of the ternarization of $\varphi$ and $\psi$. But since
$\sigma_{01}(w)=u$, $\sigma_{10}(w)=v$ are fixed points of
primitive Sturmian morphisms, they are Sturmian words, and
therefore the infinite word $w$ must be a 3iet word, as follows
from Theorem~\ref{thm:1}.
\end{proof}

\section{Morphisms with 3iet fixed point}\label{sec:ternarizace}

The aim of this section is to prove the following theorem.

\begin{thrm}\label{thm:001}
Let $\eta$ be a primitive substitution fixing a non-degenerate
3iet word $u$. Then there exist Sturmian morphisms $\varphi$ and $\psi$
having fixed points, such that $\varphi\propto\psi$ and
$\eta$ or $\eta^2$ is equal to $\mathrm{ter}(\varphi,\psi)$.
\end{thrm}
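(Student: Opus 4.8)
The plan is to reverse the construction of Theorem~\ref{thm:xy}. Given a primitive substitution $\eta$ fixing a non-degenerate 3iet word $u$, I would define the candidate Sturmian morphisms by the recipe suggested in the Remark following Lemma~\ref{lem:amicable-induction}, namely
\[
\varphi:\ \begin{aligned}0&\mapsto\sigma_{01}\eta(A)\\ 1&\mapsto\sigma_{01}\eta(C)\end{aligned}\,,\qquad
\psi:\ \begin{aligned}0&\mapsto\sigma_{10}\eta(A)\\ 1&\mapsto\sigma_{10}\eta(C)\end{aligned}\,.
\]
The whole theorem then amounts to showing that (i) these $\varphi,\psi$ are well-defined Sturmian morphisms with fixed points, (ii) $\varphi\propto\psi$, and (iii) $\mathrm{ter}(\varphi,\psi)=\eta$. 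The passage to $\eta^2$ is the hedge against the sign/orientation mismatch discussed below.

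First I would extract structural information about $\eta(A)$, $\eta(B)$, $\eta(C)$ from the geometric representation. By Theorem~\ref{thm:8} and the comment after it, the right eigenvector of $\mat{M}_\eta$ for the dominant eigenvalue $\Lambda$ is ${\mathbf v}=(1-\varepsilon',1-2\varepsilon',-\varepsilon')^T$, whose entries satisfy $\ell(B)=\ell(A)+\ell(C)$. This is the key rigidity: in the geometric picture the $B$-gap has exactly the length of an $A$-gap plus a $C$-gap. The plan is to apply $\sigma_{01}$ and $\sigma_{10}$ to the fixed-point equation $\eta(u)=u$ and to Theorem~\ref{thm:1} (which guarantees $\sigma_{01}(u),\sigma_{10}(u)$ are Sturmian) to force $\eta(B)$ to ``split'' compatibly into an $A$-part and a $C$-part, so that the ternarization identities $\eta(B)=\mathrm{ter}(\varphi(01),\psi(10))$ and $\eta(A)=\mathrm{ter}(\varphi(0),\psi(0))$, $\eta(C)=\mathrm{ter}(\varphi(1),\psi(1))$ can hold. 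Concretely, I would show $\sigma_{01}\eta(B)=\sigma_{01}\eta(A)\cdot\sigma_{01}\eta(C)$ as words and likewise $\sigma_{10}\eta(B)=\sigma_{10}\eta(C)\cdot\sigma_{10}\eta(A)$ (note the reversed order), which is precisely the combinatorial shadow of $\ell(B)=\ell(A)+\ell(C)$ together with the definitions \eqref{eq:002}. These two factorizations are what make the amicability relations $\varphi(0)\propto\psi(0)$, $\varphi(1)\propto\psi(1)$, $\varphi(01)\propto\psi(10)$ hold and simultaneously certify $\mathrm{ter}(\varphi,\psi)=\eta$.

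Next I would verify that $\varphi,\psi$ are genuine Sturmian morphisms with fixed points. Since $\sigma_{01}(u)$ is a Sturmian word fixed by $\varphi$ (applying $\sigma_{01}$ to $\eta(u)=u$ and using Lemma~\ref{lem:amicable-induction} in the forward direction, or directly checking $\varphi(\sigma_{01}(u))=\sigma_{01}(\eta(u))=\sigma_{01}(u)$), the morphism $\varphi$ maps a Sturmian word to a Sturmian word; by the result of Berstel--S\'eebold cited in the Introduction, $\varphi$ is then a Sturmian morphism. The same argument applies to $\psi$ via $\sigma_{10}(u)$. Primitivity of $\eta$ should transfer to primitivity of $\varphi$ and $\psi$ through the incidence matrices, guaranteeing actual fixed points rather than merely invariant languages.

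\textbf{The main obstacle} I expect is the orientation/sign issue that produces the ``$\eta$ or $\eta^2$'' dichotomy. The relation $\propto$ is not symmetric (as stressed after Definition~\ref{d:amicablewords}): $01\propto10$ but not conversely. A single application of $\eta$ might realize the ternarization with the roles of $\varphi$ and $\psi$ effectively interchanged — equivalently, the $B$-blocks might split as $10$-versus-$01$ rather than $01$-versus-$10$ — in which case $\eta$ itself is the ternarization of $(\psi,\varphi)$ in the wrong order and only $\eta^2$ lands in the correct amicability class. I would handle this by examining which of $\sigma_{01}(u),\sigma_{10}(u)$ the morphism $\eta$ respects with the correct orientation; if the orientation is reversed, passing to $\eta^2$ squares the swap and restores $\varphi\propto\psi$ with $\mathrm{ter}(\varphi^2\text{-type data})=\eta^2$. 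Making this case distinction precise — pinning down the orientation from the geometric representation and showing it is the only failure mode — is where the real work lies, and it is exactly the point at which Theorem~\ref{thm:8}(2) and the non-degeneracy hypothesis ($\varepsilon'\notin(0,1)$, forcing all entries of ${\mathbf v}$ to share a sign) are indispensable.
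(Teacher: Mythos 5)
Your overall architecture matches the paper's: you define $\varphi$ by $0\mapsto\sigma_{01}\eta(A)$, $1\mapsto\sigma_{01}\eta(C)$ and $\psi$ by $0\mapsto\sigma_{10}\eta(A)$, $1\mapsto\sigma_{10}\eta(C)$, and you correctly isolate the two factorizations $\sigma_{01}\eta(B)=\sigma_{01}\eta(A)\,\sigma_{01}\eta(C)$ and $\sigma_{10}\eta(B)=\sigma_{10}\eta(C)\,\sigma_{10}\eta(A)$ as the crux. The genuine gap is in how you propose to obtain them. The length identity $\ell(B)=\ell(A)+\ell(C)$ only tells you that the stretched $B$-gap $[\Lambda t_n,\Lambda t_{n+1}]$ has length $\Lambda\ell(A)+\Lambda\ell(C)$; it does not tell you that the interior point $\Lambda\bigl(t_n+\ell(A)\bigr)$ belongs to the geometric representation of $\sigma_{01}(u)$, nor that the points to its left spell $\sigma_{01}\eta(A)$. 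What makes the splitting work in the paper is Lemma~\ref{l:9}: one must first prove that $\sigma_{01}(u)$ and $\sigma_{10}(u)$ are fixed by Sturmian substitutions having the \emph{same} self-similarity factor $\Lambda$ as $\eta$ (Theorem~\ref{thm:2} only yields invariance under \emph{some} substitution, a priori with a different factor). That existence statement rests on the arithmetic facts $\Lambda'c\in c+\xZ[\varepsilon]$ and $\Lambda'(c+l-1+\varepsilon)\in c+l-1+\varepsilon+\xZ[\varepsilon]$ imported from the cited papers, and those results require $\Lambda'>0$. Note also a circularity in your verification plan: to check that your formula-defined $\varphi$ fixes $\sigma_{01}(u)$ you need $\varphi(\sigma_{01}(B))=\varphi(0)\varphi(1)=\sigma_{01}\eta(B)$, which is the very factorization you are trying to establish, so you cannot first confirm $\varphi(\sigma_{01}(u))=\sigma_{01}(u)$ and then deduce the splitting from it.

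Relatedly, your diagnosis of the ``$\eta$ or $\eta^2$'' dichotomy is not the paper's and is unsupported. The maps $\sigma_{01},\sigma_{10}$ are fixed once and for all, so there is no freedom for the $B$-blocks to ``split as $10$-versus-$01$''; the asymmetry of $\propto$ is not where the obstruction lives. The actual case split is on the sign of $\Lambda'$, the algebraic conjugate of the dominant eigenvalue of $\mat{M}_\eta$ (a quadratic unit by Theorem~\ref{thm:8}): when $\Lambda'<0$ the cited propositions underlying Lemma~\ref{l:9} do not apply, and one passes to $\eta^2$, whose factor $\Lambda^2$ has positive conjugate. Your sketch would need to replace the orientation-swap heuristic with this case distinction and supply the missing existence lemma.
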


The proof will combine results of papers~\cite{abmp,balazi-masakova-pelantova-subst_inv} concerning
substitution invariance of non-degenerate 3iet words and of the paper~\cite{balazi-masakova-pelantova-i-5}
which solves the same question for Sturmian words. We shall study infinite words defined by~\eqref{eq:2}
under a transformation $T$ from~\eqref{eq:1} where parameters $\varepsilon,l$ satisfy additional conditions
\begin{equation}\label{eq:3}
\varepsilon\in(0,1)\setminus\xQ \quad\text{ and }\quad l\notin\xZ[\varepsilon]=\xZ+\varepsilon\xZ\,.
\end{equation}
These conditions guarantee that the corresponding infinite 3iet
word is non-degene\-rate.

According to Theorem~\ref{thm:1}, the images of a 3iet word $u$ under morphisms $\sigma_{01}$, $\sigma_{10}$
are Sturmian words. Let us determine parameters of the Sturmian words $\sigma_{01}(u)$, $\sigma_{10}(u)$
(i.e.\ the corresponding exchanges of two intervals), provided that the parameters of $u$ are $\varepsilon,l,c$.
The procedure is illustrated in Figure~\ref{ff}.

\begin{figure}
\begin{center}
\includegraphics[scale=0.78]{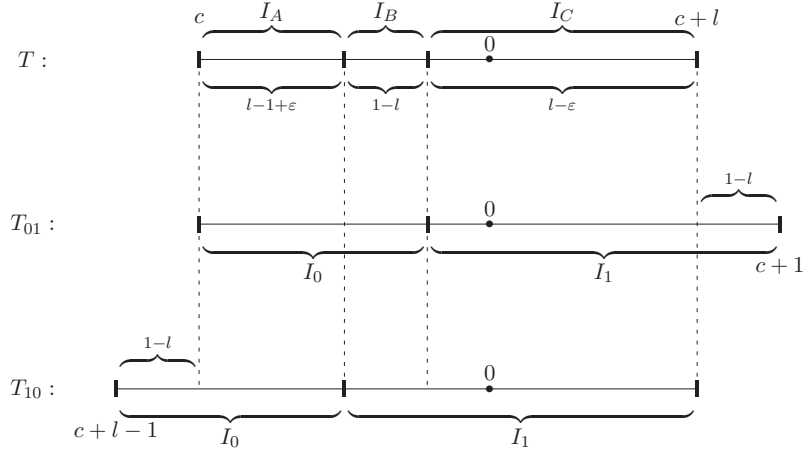}
\end{center}
\caption{Exchanges of intervals corresponding to a 3iet
word $u$ and Sturmian words $\sigma_{01}(u)$, $\sigma_{10}(u)$.}
\label{ff}
\end{figure}

Define the mapping $T_{01}:[c,c+1)\to[c,c+1)$ by
\[
T_{01}(x)=\left\{
\begin{array}{lll}
x+1-\varepsilon &\text{ for }\ x\in[c,c+\varepsilon) &=:I_0\\
x-\varepsilon &\text{ for }\ x\in[c+\varepsilon,c+1) &=:I_1
\end{array}
\right.
\]
Comparing $T_{01}$ and $T$ we obtain (see Figure~\ref{ff})
\[
x\in I_B \quad\iff\quad T_{01}(x)\in[c+l,c+1)\,.
\]
For $x\in[c,c+l)$ we have
\begin{align*}
x\in I_A &\ \implies \ x\in I_0 \ \text{ and }\ T_{01}(x)=T(x)\,,\\
x\in I_B &\ \implies \ x\in I_0,\ \ T_{01}(x)\in I_1 \ \text{ and } \ T(x)=T_{01}^2(x)\,,\\
x\in I_C &\ \implies \ x\in I_1\ \text{ and }\ T_{01}(x)=T(x)\,.
\end{align*}
Therefore $\sigma_{01}(u)$ is the infinite word coding the orbit
of 0 under the exchange $T_{01}$ of intervals with lengths
$\varepsilon$ and $1-\varepsilon$. Such a word is a Sturmian word
of the slope $\varepsilon$ and intercept $-c$ (i.e.\ the distance
of the initial point of the orbit and the left end-point of the
interval $[c,c+1)$ which is the domain of $T_{01}$).

In a similar way, we derive that the infinite word
$\sigma_{10}(u)$ is the coding of the orbit of 0 under the
exchange of two intervals $T_{10}:[c+l-1,c+l)\to[c+l-1,c+l)$. In
particular, it is a Sturmian word of the slope $\varepsilon$ and
intercept $-c-l+1$.

Let us cite the result characterizing substitution invariant
Sturmian words. Comparing~\cite{yasutomi-dm-2}
and~\cite{balazi-masakova-pelantova-i-5} we obtain that a
right-sided Sturmian word with the slope $\alpha$ and intercept
$\beta$ is substitution invariant if and only if the bidirectional
Sturmian word with the same slope and intercept is substitution
invariant.

\begin{thrm}[\cite{yasutomi-dm-2}]\label{thm:9}
Let $\alpha\in(0,1)$ be irrational and $\beta\in[0,1)$. A Sturmian
word with the slope $\alpha$ and intercept $\beta$ is invariant
under a primitive morphism if and only if
\begin{enumerate}
\item $\alpha$ is a Sturm number;
\item $\beta\in\xQ(\alpha)$;
\item $\min\{\alpha',1-\alpha'\} \leq \, \beta' \, \leq
\max\{\alpha',1-\alpha'\}$,
where $\alpha'$, $\beta'$ denote the field conjugates of
$\alpha$, $\beta$ in $\xQ(\alpha)$.
\end{enumerate}
\end{thrm}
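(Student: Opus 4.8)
The plan is to turn the Remark of Section~\ref{sec:amicable} around: if $\eta=\mathrm{ter}(\varphi,\psi)$ with $\varphi\propto\psi$ is to hold, then $\varphi,\psi$ are \emph{forced} to be
\[
\varphi:\ 0\mapsto\sigma_{01}\eta(A),\ 1\mapsto\sigma_{01}\eta(C),\qquad \psi:\ 0\mapsto\sigma_{10}\eta(A),\ 1\mapsto\sigma_{10}\eta(C).
\]
So I would simply \emph{define} $\varphi,\psi$ by these formulas and then verify the hypotheses. With this definition, the pairs $\varphi(0),\psi(0)$ and $\varphi(1),\psi(1)$ are amicable \emph{by construction}, with ternarizations $\eta(A)$ and $\eta(C)$ respectively, since $\sigma_{01}(w)\propto\sigma_{10}(w)$ with $\mathrm{ter}=w$ for any ternary $w$. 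Consequently, by Definition~\ref{d:amicablemorphisms}, both $\varphi\propto\psi$ and $\mathrm{ter}(\varphi,\psi)=\eta$ reduce to a \emph{single} pair of ``$B$-compatibility'' identities, namely $\varphi(01)\propto\psi(10)$ with ternarization $\eta(B)$, i.e.
\[
\sigma_{01}(\eta(B))=\sigma_{01}\big(\eta(A)\eta(C)\big)\qquad\text{and}\qquad \sigma_{10}(\eta(B))=\sigma_{10}\big(\eta(C)\eta(A)\big).
\]
Call these the \emph{split identities}. The whole theorem reduces to establishing them (for $\eta$ or $\eta^2$) and checking that $\varphi,\psi$ are primitive Sturmian with fixed points.

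The verification that $\varphi,\psi$ are Sturmian is the easy half. Once the split identities hold, Lemma~\ref{lem:amicable-induction} (or a direct computation using $u=\eta(u)$) gives $\varphi(\sigma_{01}(u))=\sigma_{01}(\eta(u))=\sigma_{01}(u)$, so $\varphi$ fixes the word $\sigma_{01}(u)$, which is Sturmian by Theorem~\ref{thm:1}; likewise $\psi$ fixes $\sigma_{10}(u)$. By the result of Berstel and S\'e\'ebold~\cite{berstel-seebold-bbms-1} quoted in the introduction, a morphism sending a Sturmian word to a Sturmian word is itself Sturmian, so $\varphi,\psi$ are Sturmian; being non-identity morphisms fixing aperiodic words they are primitive, hence possess fixed points. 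Thus everything hinges on the split identities.

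For the split identities I would combine Theorems~\ref{thm:7}, \ref{thm:8}, \ref{thm:2} and~\ref{thm:9}. Theorem~\ref{thm:7} gives that $\varepsilon$ is a Sturm number, $c,l\in\xQ(\varepsilon)$, together with the conjugate inequalities; the computation preceding Theorem~\ref{thm:9} identifies $\sigma_{01}(u)$ and $\sigma_{10}(u)$ as Sturmian of slope $\varepsilon$ with intercepts $-c$ and $-c-l+1$, and Theorem~\ref{thm:2} (or Theorem~\ref{thm:9} after translating the conjugate conditions of Theorem~\ref{thm:7} into those on the intercepts) shows both are substitution invariant. By Theorem~\ref{thm:8} the matrix $\mat{M}_\eta$ has a quadratic-unit dominant eigenvalue $\Lambda$ with right eigenvector $(1-\varepsilon',1-2\varepsilon',-\varepsilon')^T$ satisfying $\ell(B)=\ell(A)+\ell(C)$; this yields a geometric representation of $u$ at factor $\Lambda$ in which every stretched $B$-gap splits into an $A$-gap and a $C$-gap. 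Irrationality of $\varepsilon'$ already forces the integer vector $\mat{M}_\eta(B,\cdot)-\mat{M}_\eta(A,\cdot)-\mat{M}_\eta(C,\cdot)$ to equal $(-k,k,-k)$, so the two sides of each split identity have equal length and equal numbers of each letter; only their \emph{arrangement} remains in question. To pin it down I would invoke~\cite{balazi-masakova-pelantova-i-5}, which describes the morphisms fixing a Sturmian word together with their admissible self-similarity factors, to show that $\Lambda$ is a self-similarity factor of both $\sigma_{01}(u)$ and $\sigma_{10}(u)$; the refinement of $u$'s representation induced by the $B$-split is then $\Lambda$-self-similar, which is precisely the split identities.

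The hard part will be the \emph{orientation} of this split, and it is exactly where the dichotomy ``$\eta$ or $\eta^2$'' is born. The map $\sigma_{01}$ resolves $B$ as the pattern $01$ (the $A$-part to the left of the $C$-part), while $\sigma_{10}$ resolves it as $10$ (the reverse); matching the $\Lambda$-self-similarities of the two binary projections therefore requires a \emph{coherent} orientation of the split at every scale. When the self-similarity of the projections at factor $\Lambda$ carries the opposite orientation, which corresponds to the conjugate action $\Lambda\leftrightarrow\Lambda'$ and the letter flip $A\leftrightarrow C$, the split identities fail for $\eta$ but are restored for $\eta^2$ (factor $\Lambda^2$), since squaring is orientation-preserving; note $\eta^2$ still fixes $u$ and is primitive. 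I expect this orientation bookkeeping, and the synchronization of the self-similarity factor of the two Sturmian words $\sigma_{01}(u),\sigma_{10}(u)$ to the \emph{same} $\Lambda$ (respectively $\Lambda^2$), to be the genuine obstacle; once the split identities are secured for $\eta$ or $\eta^2$, amicability $\varphi\propto\psi$ and the equality $\mathrm{ter}(\varphi,\psi)=\eta$ (or $\eta^2$) follow immediately from the reduction above.
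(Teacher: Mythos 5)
Your proposal does not prove the statement it was supposed to prove. The statement is Yasutomi's theorem (Theorem~\ref{thm:9}): a characterization of which binary Sturmian words of slope $\alpha$ and intercept $\beta$ are fixed by a primitive morphism, via the three conditions ($\alpha$ a Sturm number, $\beta\in\xQ(\alpha)$, and the conjugate inequalities $\min\{\alpha',1-\alpha'\}\le\beta'\le\max\{\alpha',1-\alpha'\}$). Nowhere in your argument do you address either direction of this equivalence: you never derive the algebraic conditions (1)--(3) from substitution invariance, and you never construct a fixing substitution from the conditions. What you have written instead is a proof sketch of Theorem~\ref{thm:001} --- the statement that a primitive substitution $\eta$ fixing a non-degenerate 3iet word satisfies $\eta=\mathrm{ter}(\varphi,\psi)$ or $\eta^2=\mathrm{ter}(\varphi,\psi)$ for amicable Sturmian morphisms. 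The telltale signs are the ``split identities'' $\sigma_{01}(\eta(B))=\sigma_{01}(\eta(A)\eta(C))$, the definition $\varphi(0)=\sigma_{01}\eta(A)$, and the ``$\eta$ or $\eta^2$'' orientation dichotomy, none of which have any meaning for Theorem~\ref{thm:9}, which concerns a single binary word and makes no reference to ternarization. Worse, your argument explicitly invokes Theorem~\ref{thm:9} itself (``Theorem~\ref{thm:2} (or Theorem~\ref{thm:9} after translating the conjugate conditions\dots)''), so read as a proof of Theorem~\ref{thm:9} it is circular on its face.

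For the record, the paper offers no proof of Theorem~\ref{thm:9} to compare against: it is quoted from \cite{yasutomi-dm-2}, with alternative proofs referenced in \cite{balazi-masakova-pelantova-i-5} and \cite{beir-tia}. An actual proof requires machinery absent from your sketch and from this paper's amicability apparatus --- e.g.\ Ostrowski/continued-fraction expansions of the intercept in base $\alpha$, the self-similarity analysis of cut-and-project sequences in \cite{balazi-masakova-pelantova-i-5}, or Rauzy fractals as in \cite{beir-tia}; in particular the conjugate inequalities in item (3) cannot be extracted from the 3iet-to-Sturmian projections $\sigma_{01},\sigma_{10}$, which presuppose rather than produce this characterization. (As a side remark: had the target been Theorem~\ref{thm:001}, your sketch would be broadly aligned with the paper's actual proof --- it too defines $\varphi,\psi$ via $\sigma_{01}\eta,\sigma_{10}\eta$ on $A$ and $C$, uses the common self-similarity factor $\Lambda$ in the geometric representations, and handles $\Lambda'<0$ by passing to $\eta^2$ --- though the paper secures the existence of $\varphi,\psi$ with factor $\Lambda$ via Lemma~\ref{l:9} rather than your counting argument. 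But that is not the statement you were asked to prove.)
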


Note that the inequalities in Item (3) are satisfied for $\beta'$
if and only they are satisfied replacing $\beta'$ by $1-\beta'$.
Knowing the slope and intercept of Sturmian words
$\sigma_{01}(u)$, $\sigma_{10}(u)$ we can deduce from Theorem~\ref{thm:7}
the statement of Theorem~\ref{thm:1}, namely that a non-degenerate 3iet
word is invariant under a primitive substitution if and only if both
Sturmian words $\sigma_{01}(u)$, $\sigma_{10}(u)$ are substitution invariant.

We will now put into relation the substitutions fixing infinite words $u$,
$\sigma_{01}(u)$, and $\sigma_{10}(u)$. First we consider the self-similarity factors
and geometric representations of these substitutions.

\begin{lmm}\label{l:9}
Let $\eta$ be a primitive substitution over the alphabet
$\{A,B,C\}$ having as its fixed point a non-degenerate 3iet word
$u$. Let us denote its parameters $\varepsilon,l,c$. Denote by
$\Lambda$ the dominant eigenvalue of the matrix $\mat{M}_\eta$ and
by $\bigl(\ell(A),\ell(B),\ell(C)\bigr)^T$ its positive right
eigenvector corresponding to $\Lambda$. If $\Lambda'>0$, then
there exist substitutions $\varphi,\psi:\{0,1\}^*\to\{0,1\}^*$
fixing $\sigma_{01}(u)$, $\sigma_{10}(u)$, respectively, and such
that $\Lambda$ is the dominant eigenvalue of $\mat{M}_\varphi$ and
$\mat{M}_\psi$, and $\bigl(\ell(A),\ell(C)\bigr)$ is their common
right eigenvector corresponding to $\Lambda$. Moreover,
$\ell(B)=\ell(A)+\ell(C)$.
\end{lmm}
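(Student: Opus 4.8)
The plan is to extract the length data from Theorem~\ref{thm:8}, to define $\varphi$ and $\psi$ by the formulas of the Remark following Lemma~\ref{lem:amicable-induction}, and to reduce the whole statement to two combinatorial alignment identities, which carry the real difficulty. First the lengths: as $\eta$ is primitive, the eigenspace of $\mat{M}_\eta$ for the dominant eigenvalue $\Lambda$ is one-dimensional, so by item~(2) of Theorem~\ref{thm:8} and the Remark after it the positive right eigenvector $\bigl(\ell(A),\ell(B),\ell(C)\bigr)^T$ is a positive multiple of $(1-\varepsilon',1-2\varepsilon',-\varepsilon')^T$. Its middle entry is the sum of the other two, whence $\ell(B)=\ell(A)+\ell(C)$; this is the last assertion and it will be used repeatedly below.

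Recall that $\sigma_{01}$ and $\sigma_{10}$ both send $A\mapsto 0$ and $C\mapsto 1$ and differ only on $B$, where $\sigma_{01}(B)=01$ and $\sigma_{10}(B)=10$; in particular $\sigma_{01}(B)=\sigma_{01}(AC)$ and $\sigma_{10}(B)=\sigma_{10}(CA)$. Following the Remark I set $\varphi(0)=\sigma_{01}\eta(A)$, $\varphi(1)=\sigma_{01}\eta(C)$ and $\psi(0)=\sigma_{10}\eta(A)$, $\psi(1)=\sigma_{10}\eta(C)$. Then $\varphi\circ\sigma_{01}=\sigma_{01}\circ\eta$ holds on $A$ and $C$ by definition, and on $B$ it amounts to $\sigma_{01}(\eta(B))=\sigma_{01}(\eta(A)\eta(C))$; likewise $\psi\circ\sigma_{10}=\sigma_{10}\circ\eta$ reduces to $\sigma_{10}(\eta(B))=\sigma_{10}(\eta(C)\eta(A))$. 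Granting these two identities and applying the resulting commutations to $u=\eta(u)$ gives $\varphi(\sigma_{01}(u))=\sigma_{01}(u)$ and $\psi(\sigma_{10}(u))=\sigma_{10}(u)$, so $\varphi$ fixes $\sigma_{01}(u)$ and $\psi$ fixes $\sigma_{10}(u)$. Counting letters and using $\sigma_{01}(A)=\sigma_{10}(A)=0$, $\sigma_{01}(C)=\sigma_{10}(C)=1$, both incidence matrices come out equal,
\[
\mat{M}_\varphi=\mat{M}_\psi=\begin{pmatrix}
(\mat{M}_\eta)_{AA}+(\mat{M}_\eta)_{AB} & (\mat{M}_\eta)_{AC}+(\mat{M}_\eta)_{AB}\\
(\mat{M}_\eta)_{CA}+(\mat{M}_\eta)_{CB} & (\mat{M}_\eta)_{CC}+(\mat{M}_\eta)_{CB}
\end{pmatrix},
\]
and a one-line computation using $\ell(B)=\ell(A)+\ell(C)$ shows that $\bigl(\ell(A),\ell(C)\bigr)^T$ is a right eigenvector of this matrix for $\Lambda$. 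Being strictly positive it is the Perron eigenvector, so $\Lambda$ is the dominant eigenvalue of $\mat{M}_\varphi=\mat{M}_\psi$ with common right eigenvector $\bigl(\ell(A),\ell(C)\bigr)$, exactly as claimed; that $\varphi,\psi$ are moreover Sturmian is then automatic, since each fixes a Sturmian word (Theorem~\ref{thm:1} together with~\cite{berstel-seebold-bbms-1}).

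The heart of the matter, and the step I expect to be hardest, is the pair of identities $\sigma_{01}(\eta(B))=\sigma_{01}(\eta(A)\eta(C))$ and $\sigma_{10}(\eta(B))=\sigma_{10}(\eta(C)\eta(A))$. That the two sides carry equal numbers of $0$'s and $1$'s follows for free from $\ell(B)=\ell(A)+\ell(C)$ together with the $\xQ$-independence of $\ell(A),\ell(C)$; the content is that the words agree letter by letter. Geometrically this says that in the representation $\Sigma=\{t_n\}$ of $u$ with factor $\Lambda$ the interior point cutting each $B$-gap (at offset $\ell(A)$ for $\sigma_{01}$, at offset $\ell(C)$ for $\sigma_{10}$) is mapped by the inflation $x\mapsto\Lambda x$ again to a cut point; equivalently, the refined sets $\Sigma_{01},\Sigma_{10}$ — which by $\ell(B)=\ell(A)+\ell(C)$ are precisely the geometric representations of $\sigma_{01}(u),\sigma_{10}(u)$ with $\ell(0)=\ell(A),\ell(1)=\ell(C)$ — still satisfy \eqref{eq:vlastnostV} with the same $\Lambda$.

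To verify this I would pass to the conjugate (window) coordinate carried by the second eigenvector $(1-\varepsilon,1-2\varepsilon,-\varepsilon)^T$ of Theorem~\ref{thm:8}, which belongs to $\Lambda'$ and is proportional to the true interval lengths of $T$: on window coordinates the inflation acts by multiplication by $\Lambda'$, and the extra cut points are exactly those whose window coordinate falls in the enlargement of the $3$iet domain $[c,c+l)$ to the $2$iet domain $[c,c+1)$ of $T_{01}$ (resp.\ of $T_{10}$). This is where the hypothesis $\Lambda'>0$ is decisive: since $\Lambda$ is a quadratic unit we have $\Lambda'\in(0,1)$, so multiplication by $\Lambda'$ is an orientation-preserving contraction sending each acceptance window into itself, which forces the cut points to align and yields the identities. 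If instead $\Lambda'<0$ the window is reflected and one must replace $\eta$ by $\eta^2$, whose conjugate factor $\Lambda'^2$ is positive — this is the source of the alternative $\eta$ or $\eta^2$ appearing in Theorem~\ref{thm:001}. A purely combinatorial variant of this last step would combine the unique desubstitution (recognizability) of the primitive $\eta$ with the description of self-similarities of substitution invariant Sturmian words from~\cite{balazi-masakova-pelantova-i-5}.
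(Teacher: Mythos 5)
Your handling of the linear-algebra part is correct and in places cleaner than the paper's: $\ell(B)=\ell(A)+\ell(C)$ from the eigenvector of Theorem~\ref{thm:8}, the explicit incidence matrix $\mat{M}_\varphi=\mat{M}_\psi$, and the check that $\bigl(\ell(A),\ell(C)\bigr)^T$ is its Perron eigenvector for $\Lambda$ all go through (and you have, sensibly, used the convention $\sigma_{10}(A)=0$, $\sigma_{10}(C)=1$ that the rest of the paper actually relies on). But be aware that your route inverts the paper's architecture. In Lemma~\ref{l:9} the paper does \emph{not} construct $\varphi,\psi$ explicitly: it takes from Theorem~\ref{thm:2} \emph{some} substitutions fixing $\sigma_{01}(u)$, $\sigma_{10}(u)$, and the entire content of the lemma is that they can be chosen with dominant eigenvalue exactly $\Lambda$; the explicit formulas $\varphi(0)=\sigma_{01}\eta(A)$ etc.\ and the identities $\sigma_{01}(\eta(B))=\sigma_{01}(\eta(A)\eta(C))$, $\sigma_{10}(\eta(B))=\sigma_{10}(\eta(C)\eta(A))$ are deduced only later, in the proof of Theorem~\ref{thm:001}, \emph{from} that existence statement together with the uniform gap-filling property \eqref{eq:vlastnostV} that a substitution fixed point automatically enjoys. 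By constructing $\varphi,\psi$ directly you must prove those identities from scratch, and that is where your proof has a genuine gap.

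The gap is in the window argument. The containment $\Lambda'W\subset W$ of the enlarged acceptance windows $[c,c+1)$ and $[c+l-1,c+l)$ (which does hold, since both contain $0$ and $\Lambda'\in(0,1)$) yields only the set inclusions $\Lambda\Sigma_{01}\subset\Sigma_{01}$ and $\Lambda\Sigma_{10}\subset\Sigma_{10}$. It does not by itself yield property \eqref{eq:vlastnostV} for these refined sets, i.e.\ that all gaps of a given type are filled by the \emph{same} pattern after inflation --- and that uniformity is exactly what your two identities assert. For a cut-and-project set the filling of $[\Lambda s,\Lambda s']$ depends on where $\Lambda' s^*$ sits inside the window, and it is constant over all gaps of one type only when no candidate point's translated window straddles an endpoint of $W$; this is an arithmetic condition on the endpoints, not a consequence of $\Lambda'$ being an orientation-preserving contraction. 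The paper supplies precisely this input via Proposition~5.6 of~\cite{balazi-masakova-pelantova-subst_inv} ($\Lambda'c\in c+\xZ[\varepsilon]$ and $\Lambda'(c+l-1+\varepsilon)\in c+l-1+\varepsilon+\xZ[\varepsilon]$), combined with $\Lambda'\xZ[\varepsilon]=\xZ[\varepsilon]$ and the proof of Proposition~5.3 of~\cite{balazi-masakova-pelantova-i-5}; this is also the real reason the hypothesis $\Lambda'>0$ is needed. Your closing sentence, proposing to fall back on~\cite{balazi-masakova-pelantova-i-5}, effectively concedes that the decisive step is outsourced to the same references the paper uses; as written, the ``contraction forces the cut points to align'' step does not close it.
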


\begin{proof}
Theorems~\ref{thm:7} and~\ref{thm:8} imply that ${\mathbf
v}=\bigl(\ell(A),\ell(B),\ell(C)\bigr)^T=\bigl(1-\varepsilon',1-2\varepsilon',-\varepsilon'\bigr)^T$
is a right eigenvector of $\mat{M}_\eta$ corresponding to
$\Lambda$. Recall that $\sigma_{01}(u)$ is the Sturmian word of
the slope $\varepsilon$ and intercept $-c$, and $\sigma_{10}(u)$
the Sturmian word of the slope $\varepsilon$ and intercept
$-c-l+1$. By Theorem~\ref{thm:2} they are invariant under
substitutions, say $\phi$, $\psi$. Since $\varepsilon$ and
$1-\varepsilon$ are densities of letters $0$ and $1$ respectively,
the substitution matrices $\mat{M}_\varphi$ and $\mat{M}_\psi$
must have the eigenvector
$\bigl(1-\varepsilon',-\varepsilon'\bigr)^T=\bigl(\ell(A),\ell(C)\bigr)^T$.
Obviously $\ell(B)=\ell(A)+\ell(C)$.

It remains to show that $\varphi,\psi$ can be chosen so that the
dominant eigenvalue of $\eta$, i.e.\ $\Lambda$, is also the
dominant eigenvalue of $\mat{M}_\varphi$, $\mat{M}_\psi$. As a
consequence of the equality $\mat{M}_\eta{\mathbf
v}=\Lambda{\mathbf v}$ and the fact that $\Lambda$ is a quadratic
unit, we have $\xZ+\varepsilon'\xZ=:\xZ[\varepsilon'] = \Lambda
\xZ[\varepsilon'] = \Lambda'\xZ[\varepsilon']$, which, after
conjugation, gives
\begin{equation}\label{eq:11}
\xZ[\varepsilon] = \Lambda' \xZ[\varepsilon] = \Lambda\xZ[\varepsilon]\,.
\end{equation}

Proposition~5.6 of~\cite{balazi-masakova-pelantova-subst_inv}
(see also Remarks~5.7 and 6.5 ibidem) implies  that $\Lambda'c\in
c+\xZ[\varepsilon]$ and $\Lambda'(c+l-1+\varepsilon)\in
c+l-1+\varepsilon + \xZ[\varepsilon]$. This, together
with~\eqref{eq:11} gives
\begin{equation}
\begin{split}
\Lambda'(c+\xZ[\varepsilon]) &= c+\xZ[\varepsilon]\,,\\
\Lambda'(c+l-1+\varepsilon+\xZ[\varepsilon])&= c+l-1+\varepsilon
+\xZ[\varepsilon]\,.
\end{split}
\end{equation}
Note that the assumption $\Lambda'>0$ is required in order that we
can use results from~\cite{balazi-masakova-pelantova-subst_inv}.

Realize that substitution invariance of $\sigma_{01}(u)$ and
$\sigma_{10}(u)$ implies by Theorem~\ref{thm:9} that their
parameters satisfy
\[
\min\{\varepsilon',1-\varepsilon'\} \leq \, -c' \, \leq
\max\{\varepsilon',1-\varepsilon'\},\quad
\min\{\varepsilon',1-\varepsilon'\} \leq  \, c'+l' \, \leq
\max\{\varepsilon',1-\varepsilon'\}\,.
\]
These inequalities, together with~\eqref{eq:11}, already imply
that there exist substitutions $\varphi$ and $\psi$ with factor
$\Lambda$ (see proof of Proposition~5.3
in~\cite{balazi-masakova-pelantova-i-5}).
\end{proof}

\begin{proof}[Proof of Theorem~\ref{thm:001}]
The dominant eigenvalue of the matrix $\mat{M}_\eta$ is a
quadratic unit $\Lambda$. If $\Lambda'>0$, we shall prove the
statement for $\eta$. If $\Lambda'<0$, we will consider the second
iteration $\eta^2$. Therefore we consider without loss of
generality $\Lambda'>0$.

With the help of geometric representation of infinite words we
will show that morphisms $\varphi,\psi$ found by Lemma~\ref{l:9}
are amicable, i.e.\ $\varphi\propto\psi$, and that $\eta$ is their
ternarization. We use the fact that all of the considered
substitutions, $\eta,\varphi$ and $\psi$ have the same factor
$\Lambda$. The idea of the proof is illustrated in
Figure~\ref{ffff}.

\begin{figure}[!ht]
\centering
\begin{flushleft}
Ternary substitution $\eta:\ A\mapsto B$, $B\mapsto BCB$, $C\mapsto CAC$ and its fixed point $u$
\end{flushleft}
\includegraphics[scale=0.78]{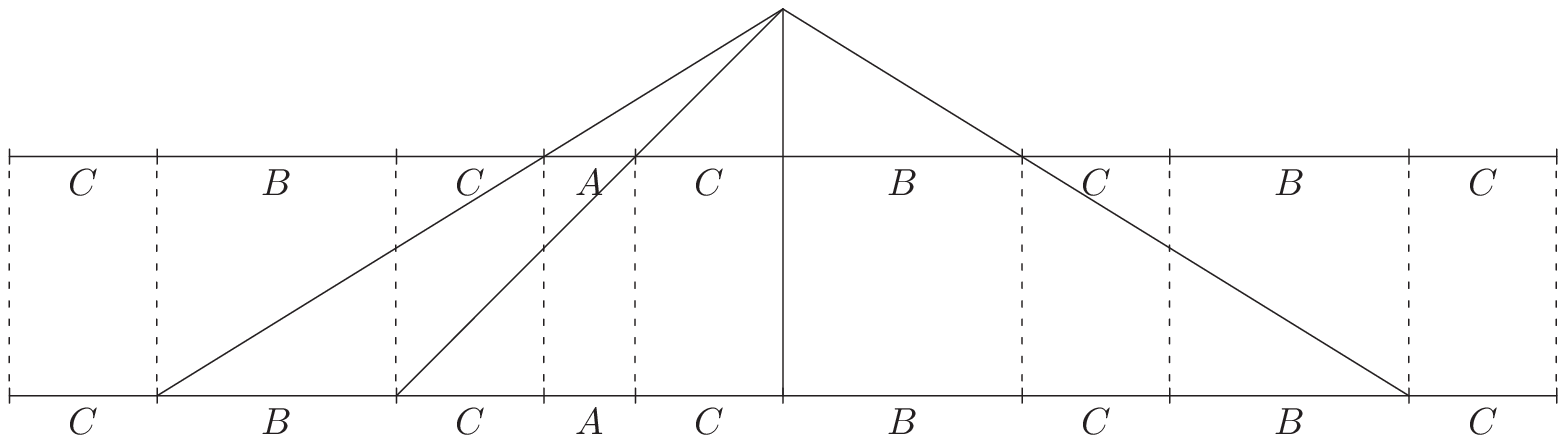}
\vskip0.5cm
\begin{flushleft}
Sturmian substitution $\varphi = \sigma_{01}\circ\eta:\ 0\mapsto 01,\ 1\mapsto 101$ and its fixed point $\sigma_{01}(u)$
\end{flushleft}
\includegraphics[scale=0.78]{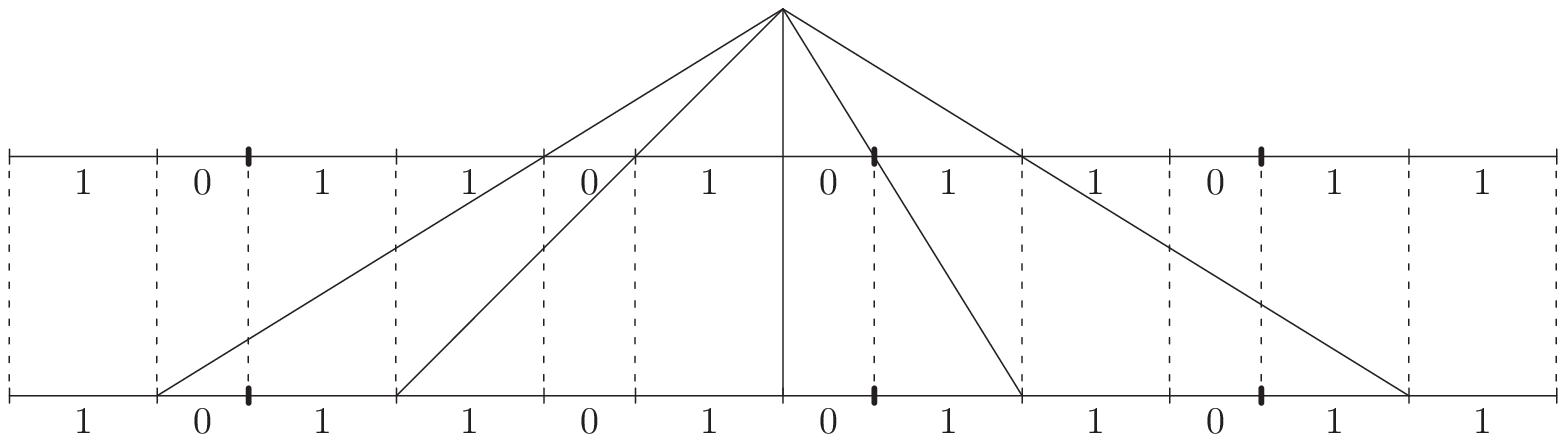}
\vskip0.5cm
\begin{flushleft}
Sturmian substitution $\psi=\sigma_{10}\circ\eta:\ 0\mapsto 10,\ 1\mapsto 101$ and its fixed point $\sigma_{10}(u)$
\end{flushleft}
\includegraphics[scale=0.78]{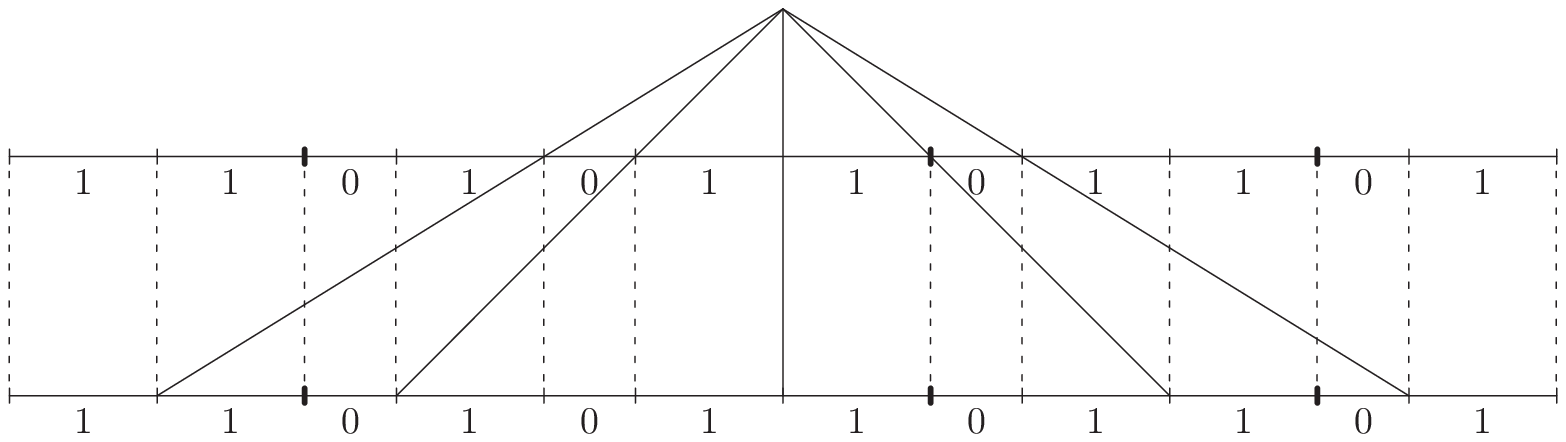}
\caption{Geometric representation of infinite words $u$, $\sigma_{01}(u)$
and $\sigma_{10}(u)$, and the substitutions $\eta$, $\varphi$, $\psi$ (all with the same self-similarity 
factor $\Lambda$) fixing them. We have $u=\mathrm{ter}(\sigma_{01}(u),\sigma_{10}(u))$ and 
$\eta=\mathrm{ter}(\varphi,\psi)$.}
\label{ffff}
\end{figure}

Let $u$ be a fixed point of $\eta$ and let $\{t_n\}_{n=0}^\infty$
be the geometric representation of the substitution $\eta$ with
the dominant eigenvalue $\Lambda$ and the right eigenvector
$\bigl(\ell(A),\ell(B),\ell(C)\bigr)^T$ for which
\[
t_{n+1}-t_n = \ell(X)\quad\iff\quad u_n=X\in\{A,B,C\}\,.
\]
Morphisms $\varphi$ and $\psi$ are Sturmian substitutions with
fixed points $\sigma_{01}(u)$, $\sigma_{10}(u)$, respectively. The
geometric representation of the infinite word $\sigma_{01}(u)$ is
\[
\{t_n^{01}\}_{n=0}^\infty:= \{t_n\}_{n=0}^\infty \cup
\{t_n+\ell(A)\mid u_n=B\}\,,
\]
and the geometric representation of the infinite word
$\sigma_{10}(u)$ is
\[
\{t_n^{10}\}_{n=0}^\infty:= \{t_n\}_{n=0}^\infty \cup
\{t_n+\ell(C)\mid u_n=B\}\,.
\]
If $t_{n+1}-t_n=\ell(A)$, i.e.\ $u_n=A$, then the segment in
$\{t_n\}_{n=0}^\infty$ between $\Lambda t_n$ and $\Lambda t_{n+1}$
(both in $\{t_n\}_{n=0}^\infty$) contains points ordered according
to $\eta(A)$. And the segment in $\{t_n^{01}\}_{n=0}^\infty$
between $\Lambda t_n$ and $\Lambda t_{n+1}$ (both in $\{t_n^{01}\}_{n=0}^\infty$) contains points
ordered according to $\sigma_{01}\bigl(\eta(A)\bigr)$.
Analogically, for $n$ such that $u_n=C$, points in $\{t_n^{01}\}_{n=0}^\infty$
between $\Lambda t_n$ and $\Lambda t_{n+1}$ are ordered according to
$\sigma_{01}\bigl(\eta(C)\bigr)$.

From what was said above it is obvious, that the substitution $\varphi$ with factor $\Lambda$ fixing
the Sturmian word $\sigma_{01}(u)$ must be of the form
\[
\varphi: \
\begin{aligned}
0 & \mapsto \sigma_{01}\eta (A)\\
1 & \mapsto \sigma_{01}\eta (C)
\end{aligned}\,.
\]
In a similar way, we can deduce that the substitution $\psi$ under which the infinite word
$\sigma_{10}(u)$ is invariant is of the form
\[
\psi: \
\begin{aligned}
0 & \mapsto \sigma_{10}\eta (A)\\
1 & \mapsto \sigma_{10}\eta (C)
\end{aligned}\,.
\]
By Definition~\ref{d:amicablewords}, we have that
$\varphi(0)\propto\psi(0)$ and $\varphi(1)\propto\psi(1)$,
and that $\eta(A)=\mathrm{ter}(\varphi(0),\psi(0))$,
$\eta(C)=\mathrm{ter}(\varphi(1),\psi(1))$.

In order to complete the proof of the theorem, we have to show that
$\varphi(01)\propto\psi(10)$ and $\eta(B)=\mathrm{ter}(\varphi(01),\psi(10))$.
For that, consider $n\in\xZ$ such that
$t_{n+1}-t_n=\ell(B)=\ell(A)+\ell(C)$, i.e.\ $u_n=B$. The
segment between $\Lambda t_n$ and $\Lambda
\bigl(t_{n}+\ell(A)\bigr)$ in the geometric representation
$\{t_n^{01}\}_{n=0}^\infty$ of $\sigma_{01}(u)$ contains the
points arranged according to $\sigma_{01}\eta(A)$. Similarly, the
segment between $\Lambda \bigl(t_{n}+\ell(A)\bigr)$ and $\Lambda
t_{n+1}$ contains the points arranged according to
$\sigma_{01}\eta(C)$. Of course, the segment between $\Lambda t_n$
and $\Lambda t_{n+1}$ in the geometric representation
$\{t_n\}_{n=0}^\infty$ of the original infinite word $u$ is
arranged according to $\eta(B)$. Altogether, we have
\[
\sigma_{01}\eta(B) = \sigma_{01}\eta(A)\sigma_{01}\eta(C) =
\varphi(0)\varphi(1)\,.
\]
Analogously,
\[
\sigma_{10}\eta(B) = \sigma_{10}\eta(C)\sigma_{10}\eta(A) =
\psi(1)\psi(0)\,.
\]
This means that $\varphi(01)\propto\psi(10)$, and the word
$\eta(B)$ is the ternarization of words $\varphi(01)$ and
$\psi(10)$. Consequently, $\varphi$ is amicable to $\psi$, and the
substitution $\eta$ is the ternarization of $\varphi$ and $\psi$.
\end{proof}

\section*{Acknowledgements}

We acknowledge financial support by the grants MSM6840770039 and
LC06002 of the Ministry of Education, Youth, and Sports of the
Czech Republic.



\end{document}